
\documentclass{amsart}

\usepackage{mathabx}
\usepackage{extpfeil}
\usepackage{euscript}
\usepackage{hyperref}
\usepackage{amssymb, fancyhdr, stmaryrd, amsthm, mathrsfs, textcomp}

\usepackage[utf8]{inputenc}
\usepackage{amsfonts,amsmath,amscd,amssymb}
\usepackage{dsfont,mathtools}
\usepackage{xspace}
\usepackage{undertilde}
\usepackage{tikz-cd}
\usepackage{caption}
\usepackage{subcaption}
\usepackage{graphicx}
\usepackage{listings}
\usepackage{faktor}
\usepackage{calligra}

\def\N{\ensuremath\mathbb{N}}

\def\F{\ensuremath\mathbb{F}}

\def\s{\ensuremath\sigma}

\def\I{\ensuremath\mathcal{I}}
\def\J{\ensuremath\mathcal{J}}

\def\M{\ensuremath\mathcal{M}}
\def\t{\ensuremath\tau}

\def\x{\ensuremath\times}

\def\to{\ensuremath\rightarrow}
\def\sset{\ensuremath\subseteq}

\def\<{\ensuremath\langle}
\def\>{\ensuremath\rangle}

\def\quotient#1#2{%
    \raise1ex\hbox{$#1$} \big /\lower1ex\hbox{$#2$}%
}
\DeclareMathAlphabet{\mathcalligra}{T1}{calligra}{m}{n}

\DeclareMathOperator{\Hom}{Hom}
\DeclareMathOperator{\res}{Res}
\DeclareMathOperator{\aut}{Aut}

\DeclareMathOperator{\fun}{Fun}
\DeclareMathOperator{\ind}{Ind}
\DeclareMathOperator{\coind}{Coind}

\newtheorem{thm}{Theorem}[section]
\newtheorem*{thm*}{Theorem}
\newtheorem{lemma}[thm]{Lemma}

\theoremstyle{definition}
\newtheorem{defn}[thm]{Definition}

\newtheorem{ex}[thm]{Example}


\newcommand{\vg}{\ensuremath{\mathbf{g}}\xspace}
\newcommand{\vh}{\ensuremath{\mathbf{h}}\xspace}

\newcommand{\vk}{\ensuremath{\mathbf{k}}\xspace}

\newcommand{\vx}{\ensuremath{\mathbf{x}}\xspace}
\newcommand{\vy}{\ensuremath{\mathbf{y}}\xspace}

\newcommand{\sB}{\mathcal{B}}
\newcommand{\sC}{\mathcal{C}}
\newcommand{\sD}{\mathcal{D}}

\newcommand{\sF}{\mathcal{F}}

\newcommand{\sJ}{\mathcal{J}}

\newcommand{\sK}{\mathcal{K}}

\newcommand{\sT}{\mathcal{T}}

\newcommand{\sU}{\mathcal{U}}

\begin{document}
\title[Frobenius reciprocity]{Frobenius reciprocity for topological groups}
\author{Katerina Hristova}
\email{K.Hristova@warwick.ac.uk}
\address{Department of Mathematics, University of Warwick, Coventry, CV4 7AL, UK}
\thanks{The research is supported by the author's EPSRC PhD studentship. The author would like to thank Dmitriy Rumynin for proposing the problem and for the numerous valuable suggestions and helpful comments, and also Inna Capdeboscq for the many helpful conversations.}
\date{7  January 2018}
\subjclass{Primary 22A25 ; Secondary 22D30}
\keywords{induction, coinduction, restriction, left adjoint, right adjoint, continuous representation, Tate module, compact representation, linearly compact topological module}

\begin{abstract}
We investigate the existence of left and right adjoints to the restriction functor in three categories of continuous representations of a topological group: discrete, linear complete and compact.

\end{abstract}

\maketitle

\section*{Introduction}

Given a representation of a group $G$ one can always define a representation of a subgroup $H \leq G$ by restricting the group action to the subgroup. This defines a functor 
$$\res_H^G: \mathcal{R}ep(G) \to \mathcal{R}ep(H).$$
We are interested in functors going in the opposite direction which are adjoint to $\res_H^G$. Such functors are called induction functors and the adjointness relation is known as Frobenius reciprocity. As the restriction functor is a forgetful functor, it has both a left and a right adjoint given by $\otimes$ and $\Hom(,)$ respectively. Thus, we have two induction functors:
$$\ind_H^G, \coind_H^G: \mathcal{R}ep(H) \to \mathcal{R}ep(G).$$
To distinguish between them we call the right adjoint induction and denote it $\ind_H^G$, and the left adjoint coinduction and denote it $\coind_H^G$. 
 Since adjoints are unique up to isomorphism, it is clear that in the case of groups $\coind_H^G$ is given by the tensor product and $\ind_H^G$ is given by the $\Hom(,)$ functor. For finite groups, and more generally for groups of arbitrary cardinality, provided that $H$ is of finite index in $G$, $\ind_H^G$ and $\coind_H^G$ are isomorphic. However, in general this is not true. It is worth noting that in some of the literature the opposite convention is adopted - the left adjoint functor is called induction and the right - coinduction. This is always the case when studying the representation theory of rings. However, for $p$-adic groups the notions are the same as ours which is the reason for our terminology.

In the present paper we wish to extend the idea of Frobenius reciprocity to topological groups and to investigate the existence of the induction and coinduction functors. In particular, we fix a topological group $G$ and a closed subgroup $H \leq G$. We look at \emph{continuous} representations of such groups over an associative ring $R$. These are topological modules $(V, \sT)$ over $R$, such that the action map $G \x V \to V$ is continuous with respect to the topology $\sT$ on $V$. Varying this topology we obtain different categories of continuous representations. We are interested in three such: the category of discrete representations $\M_d(G)$, where $V$ is endowed with the discrete topology, the category of linearly topologized and complete representations $\M_{ltc}(G)$, where the topology on $V$ is linear and complete, and the category of compact representations $\M_c(G)$, where $V$ is given a linear, complete topology in which all quotients of open submodules are of finite length. In each of these categories we investigate the existence of left and right adjoint to the restriction functor $\res_H^G$. Our main tool is Freyd's Adjoint Functor Theorem. It gives a necessary and sufficient condition for existence of adjoints in (locally) small categories \cite{mac}. Each section of the paper contains a main result which is a criterion for the existence of a left and a right adjoint to $\res_H^G$, i.e, we give variants of Frobenius reciprocity in each of the aforementioned categories.
\begin{thm*}[Main]
Let $G$ be a topological group and $H \leq G$ a closed subgroup. The restriction functor 
$$\res_H^G: \M_{\star}(G) \to \M_{\star}(H)$$
has the following properties:
\begin{enumerate}
\item In $\M_d(G)$ the functor $\res_H^G$ always has a right adjoint, given by the induction functor $\ind_H^G$, and has a left adjoint $\coind_H^G$ if $H$ is also open.
\item In $\M_{ltc}(G)$ and $\M_c(G)$ the functor $\res_H^G$ always has a left adjoint $\coind_H^G$ and has a right adjoint $\ind_H^G$ if $H$ is also open.
\end{enumerate}

\end{thm*}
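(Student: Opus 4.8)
The plan is to obtain each of the four adjoint functors from a single application of Freyd's General Adjoint Functor Theorem, exploiting the fact that parts (1) and (2) are mirror images of one another: passing from $\M_d$ to the complete categories $\M_{ltc}$ and $\M_c$ interchanges limits with colimits and hence interchanges the two adjoints. Concretely, to produce the right adjoint $\ind_H^G$ one checks that the source category $\M_\star(G)$ is cocomplete and locally small, that $\res_H^G$ preserves all small colimits, and that the (dual) solution set condition holds; to produce the left adjoint $\coind_H^G$ one checks instead that $\M_\star(G)$ is complete, that $\res_H^G$ preserves all small limits, and the solution set condition. I would therefore organise the proof around three ingredients — (co)completeness of the categories, preservation of (co)limits by $\res_H^G$, and the solution set condition — and treat the three categories in parallel.

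For the first ingredient one describes (co)limits explicitly. In $\M_d(\cdot)$ colimits are computed on underlying $R$-modules carrying the discrete topology, since smoothness of the action (openness of point stabilisers) is inherited by coproducts and by quotients, so $\M_d(G)$ is cocomplete; limits are obtained by taking the limit of the underlying $R$-modules and passing to the submodule of vectors with open $G$-stabiliser, so $\M_d(G)$ is complete. In $\M_{ltc}(\cdot)$ limits are the underlying $R$-module limits with the subspace/product topology, which is again linear and complete, so $\M_{ltc}(G)$ is complete; colimits are more delicate — one takes the completion of the colimit of linearly topologised modules — and for $\M_c$ one must in addition reflect into the subcategory cut out by the finite-length condition, but one checks these categories are cocomplete (alternatively cocompleteness of $\M_c$ may be deduced from completeness of $\M_d$ via the Lefschetz-type duality). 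The finite-length-quotient condition survives the constructions needed because a finite product of finite-length modules is finite length. Local smallness is immediate in all three cases, since hom-sets embed into hom-sets of $R$-modules.

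For the second ingredient, the point is that $\res_H^G$ always preserves colimits in $\M_d$ and always preserves limits in $\M_{ltc}$ and $\M_c$, because those constructions are performed on the underlying modules and topologies and see the group only through the diagonal action; this already delivers $\ind_H^G$ in $\M_d$ and $\coind_H^G$ in $\M_{ltc}$ and $\M_c$, once the solution set condition is known. The remaining preservation statements are exactly where the hypothesis that $H$ is open is used. In $\M_d$: if $H$ is open then a vector of a $G$-module has open $G$-stabiliser if and only if it has open $H$-stabiliser — one inclusion is automatic, and the other holds because $\stab_G(v) \supseteq \stab_H(v)$ and an open subgroup of the open subgroup $H$ is open in $G$ — so $\res_H^G$ commutes with the passage to smooth vectors and hence preserves limits, giving $\coind_H^G$. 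The complete categories are handled by the mirror argument: openness of $H$ makes the completions defining colimits insensitive to whether one works over $G$ or over $H$, so $\res_H^G$ preserves colimits in $\M_{ltc}$ and $\M_c$, giving $\ind_H^G$ there.

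The main obstacle is the third ingredient, the solution set condition, and I would spend most of the proof on it. For the left-adjoint case one must show that every morphism $f\colon W \to \res_H^G V$ in $\M_\star(H)$ factors through $\res_H^G V'$ for some $V'$ drawn from a fixed set of objects of $\M_\star(G)$; the natural choice is the smallest closed $G$-subobject $V'$ of $V$ containing $f(W)$, and the content is that $|V'|$ is bounded by a cardinal depending only on $|W|$, $|R|$ and $G$ (not on $V$), that $V'$ again lies in $\M_\star(G)$ (closed $G$-submodules of complete, resp. compact, modules are of the same kind; smooth submodules of smooth modules are smooth), and that such bounded objects form a set up to isomorphism. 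For the right-adjoint case one dualises, factoring $f\colon \res_H^G V \to W$ through the quotient of $V$ by the largest closed $G$-subobject contained in $\ker f$ and bounding the size of that quotient; in the complete and compact settings one controls both the category and the cardinality by presenting an object as the inverse limit of its finite-length (resp. small) quotients, which is the technically heaviest step. Once the solution set condition is established, Freyd's theorem yields the adjoints, after which a routine check identifies $\coind_H^G(W)$ with an appropriately topologised $R[G]\otimes_{R[H]}W$ and $\ind_H^G(W)$ with an appropriately topologised $\Hom_{R[H]}(R[G],W)$.
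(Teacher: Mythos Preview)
Your overall architecture matches the paper's: both rely on Freyd's Adjoint Functor Theorem, verifying (co)completeness and (co)continuity of $\res_H^G$. One minor difference is that the paper does not use Freyd for $\ind_H^G$ in $\M_d$; it writes down the induced module $\widetilde{W}$ of continuous left $H$-equivariant functions $G\to W$ and checks the adjunction by hand. More importantly, your emphasis is inverted relative to the paper's. You flag the solution set condition as ``the main obstacle'' and sketch cardinality bounds for it, whereas the paper dispatches SSC in one line (factor through the $G$-subobject generated by the image) and spends its effort on the explicit construction of products and coproducts in $\M_{ltc}(G)$ and $\M_c(G)$ --- in particular the Higgins-style coproduct, built as the closure of $\bigoplus_i V_i$ inside $\prod_{(W,\tau_W)} W$, the product over all continuous $G$-quotients.

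There is a genuine gap in your treatment of cocontinuity in $\M_{ltc}$ and $\M_c$. Your sentence ``openness of $H$ makes the completions defining colimits insensitive to whether one works over $G$ or over $H$'' is the conclusion, not an argument, and your appeal to a ``mirror'' of the $\M_d$ case does not go through: the Lefschetz-type duality you allude to relates $\M_d$ and $\M_c$ only over a field, not $\M_{ltc}$, and even then the mechanism by which openness of $H$ intervenes is different. In $\M_d$ the point is simply that an open subgroup of the open subgroup $H$ is open in $G$, so $H$-smooth equals $G$-smooth. In $\M_{ltc}$ the coproduct topology on $\bigoplus_i V_i$ is induced from the product over all $G$-quotients $(W,\tau_W)$, and there are genuinely more $H$-quotients than $G$-quotients, so the $H$-coproduct topology is a priori finer. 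The paper's actual argument shows that for each $H$-quotient $W$ one can form the $G$-module $\fun_H(G,W)\cong\prod_{a\in H\backslash G} a\otimes_H W$, and that when $H$ is open this module lies in $\M_{ltc}(G)$ and recovers the open sets coming from $W$; this is where openness of $H$ is used, to promote $H$-continuity of the action on $\fun_H(G,W)$ to $G$-continuity. Without this (or an equivalent) comparison of the two coproduct topologies, your step does not close.
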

Now let us give a section-by-section outline of the present paper.

Section \ref{one} is a short introduction to the categories of interest. We set the notation and give a brief description of the objects and morphisms in each category. A more precise definition of a continuous representation of $G$ is given. 

In Section \ref{smooth} we investigate Frobenius reciprocity in the category of discrete representations $\M_d(G)$ of $G$. Considering representations over modules endowed with the discrete topology is the standard approach to continuous representations. In particular, if $G$ is a locally compact totally disconnected group, then $\M_d(G)$ is precisely the category of smooth representations of $G$ \cite{LL}, \cite{Vig1}, \cite{Vig2}. This category is widely studied as examples of groups with such topology include $p$-adic groups, topological Kac-Moody groups and groups of Kac-Moody type \cite{HrD}. 
The construction of the induction functor from a closed subgroup $H \leq G$ in the case of smooth representations is well-known. Following Bushnell and Henniart in terminology, we generalise the construction of $\ind_H^G$ to arbitrary topological groups \cite{LL}.
Next, we move on to coinduction, which is more subtle. In the case of locally compact totally disconnected groups, the coinduction functor is called compact induction or induction with compact support \cite{LL}, \cite{Vig1}. In Theorem \ref{l-adj d} we establish a sufficient condition for its existence in $\M_d(G)$. In particular, we claim that $H\leq G$ must also be an open subgroup. 
 It is worth remarking that Bushnell and Henniart work over 
a field of characteristic zero. This is extended by Vign\'{e}ras to positive characteristic. She also describes the induction and compact induction functors from a closed subgroup of a locally profinite group for modules over a commutative ring \cite{Vig1}. We, however, keep the generality of an associative ring with identity and arbitrary topological groups.

In Section \ref{two} we move on to the category of linearly topologized and complete continuous $G$-modules $\M_{ltc}(G)$. First, we give a precise formulation of the continuity condition. We then begin our investigation of adjoints to $\res_H^G$. We wish to use Freyd's Theorem to establish whether $\ind_H^G$ and $\coind_H^G$ exist in $\M_{ltc}(G)$. The main condition, which the category needs to satisfy to ensure such existence, is to be complete and cocomplete, i.e., to be closed under taking small limits and small colimits. Since $\M_{ltc}(G)$ is abelian, using the product-equalizer construction of limits and the coproduct-coequalizer construction of colimits, it is enough to show that the category is closed under taking products and coproducts. We show how to construct these in $\M_{ltc}(G)$ in Lemma \ref{prod} and Lemma \ref{coprod} respectively. The main result of Section \ref{two} is Theorem \ref{adjoints-ltc}. It establishes the existence of the left adjoint to $\res_H^G$, i.e., $\coind_H^G$ in the case when $H \leq G$ is closed, and the existence of the right adjoint, i.e., $\ind_H^G$ when $H$ is also open.

In Section \ref{three} we describe the category of (linearly) compact continuous representations of $G$, denoted $\M_c(G)$. The notion of linear compactness of vector spaces first appears in Lefschetz' ``Algebraic Topology'' \cite{Lef}. He calls a vector space linearly compact if it is linearly topologized, Hausdorff, and satisfies the finite intersection property on cosets of closed subspaces. Such spaces are complete \cite{Lef}. This leads to an alternative definition of linearly compact vector spaces given by Drinfeld as linearly topologized complete Hausdorff spaces with the property that open subspaces have finite codimension \cite{Dri}. Dieudonn\'{e} unifies these definitions by showing they are all equivalent in the case of vector spaces \cite{Dieu}. Compact vector spaces are also topological duals of discrete ones \cite{Dri}. Taking the duality viewpoint, we begin the section by constructing an example to show that if $R$ is a field, then $\res_H^G$ is cocontinuous in neither $\M_c(G)$, nor $\M_{ltc}(G)$ unless $H$ is open.
We then move on to the case of modules over an associative ring. In this setting the definitions for compactness given above are not equivalent. A linearly compact topological $R$-module $V$ is linearly topologized, Hausdorff, and such that every family of closed cosets in $V$ has the finite intersection property \cite{War}. However, this is not  equivalent to $V$ being linearly topologized, complete, and such that open submodules have finite colength. We wish to take the point of view of the latter definition as it is closer to the Beilinson-Drinfeld approach to linearly compact topological vector spaces \cite{Hitch}.
Modules defined as above are known in the literature as \emph{pseudocompact} \cite{Iovanov}, \cite{frauke}. These come up in deformation theory, in particular, they are useful when describing lifts and deformations of representations of a profinite group over a perfect field of characteristic $p$ \cite{frauke}. 

 After we have fixed the definition of the compact topology for a module over a ring, we follow our strategy from Section \ref{two}: we construct products (Lemma \ref{prod-c}), coproducts (Lemma \ref{coprod-c}) and investigate the existence of $\ind_H^G$ and $\coind_H^G$ in $\M_c(G)$. Theorem \ref{adj-c} is the main result of the section. It establishes the existence of the coinduction functor for $H \leq G$ closed, and the existence of the induction functor, given that $H$ is also open. 


We finish the paper with a brief discussion of the category of Tate representations $\M_T(G)$. Tate spaces, or locally linearly compact spaces as defined by Lefschetz \cite{Lef}, are complete linearly topologized vector spaces, such that the basis at zero is given by mutually commensurable subspaces \cite{Beil}. Equivalently, a Tate space is a vector space which splits as a topological direct sum of a discrete and a compact space \cite{Dri}. The latter definition also generalises to modules over a commutative ring \cite{Hitch}. Hence, for a topological group $G$ one can define the category of \emph{Tate representations}, as the category with objects Tate spaces, on which $G$ acts continuously. These are an interesting object to study as they appear not only in the phenomenal work of Tate, but also in other areas of mathematics, such as the algebraic geometry of curves, the study of chiral algebras and infinite dimensional Lie algebras, as well as in Conformal Theory \cite {Hitch}, \cite{Beil}.  We do not fully investigate the analogue of Frobenius reciprocity in $\M_T(G)$, but we pose some questions about it.

\section{Introducing the categories} \label{one}

Throughout let $R$ be an associative ring with $1$ and $G$ a topological group. We are interested in studying \emph{continuous} representations of $G$ over $R$.  Let us explain precisely what we mean by this. 

First, recall that $(V, \sT)$ is called a \emph{topological $R$-module} if $\sT$ makes $(V,+)$ into a topological group and the $R$-action map $\cdot: R \x V \to V, (r, v) \mapsto r \cdot v$ is continuous with respect to $\sT$ on the right and the product topology on the left (where $R$ is endowed with the discrete topology). 
With this in mind, we make the following definition:

\begin{defn}
Let $(V, \sT)$ be a topological (left) $R$-module and $\pi: G \to \aut_R(V)$ a homomorphism. Then the pair $(\pi, V)$ is a representation of $G$. It is called \emph{continuous} if the map $\phi: G \x V \to V$ defined by $(\vg,v) \mapsto \pi(\vg)v$ is continuous with respect to the product topology on the left and $\sT$ on the right.
\end{defn}
Whenever we talk about topological $R$-modules, we always mean left modules, but of course the results remain true for right $R$-modules. From the definition above it is clear that the continuity condition depends on the topology we put on $V$. Hence, by changing this topology we obtain different categories of continuous representations. We are mainly interested in three such:
\begin{itemize}
\item $\M_d(G)$ - category of discrete representations of $G$. The objects are continuous representations $(\pi, V)$ of $G$, such that $(V, \sT)$ is a topological $R$-module, endowed with the discrete topology. The morphisms between two objects $(\pi_1, V_1)$ and $(\pi_2, V_2)$ are given by $R$-linear maps $f: V_1 \to V_2$, such that $f(\pi_1(\vg)v)=\pi_2(\vg) f(v)$, for $\vg \in G$ and $v \in V_1$. 
\end{itemize}
In the next two categories of interest $V$ is  given a \emph{linear} topology $\sT$. More precisely, we say that a topology $\sT$ is \emph{linear}, or that $V$ is \emph{linearly topologized}, if the open $R$-submodules of $V$ form a fundamental system of neighbourhoods at zero \cite{War}. 
This gives rise to the following categories:
\begin{itemize}
\item $\M_{ltc}(G)$ - category of linearly topologized complete representations. The objects are pairs $(\pi, V)$, where $V$ is a continuous representation of $G$, endowed with a linear topology $\sT$, such that $(V, \sT)$ is a complete topological space. The morphisms between two objects $(\pi_1, V_1)$ and $(\pi_2, V_2)$ are given by continuous $R$-module homomorphisms $f: V_1 \to V_2$, such that $f(\pi_1(\vg)v)=\pi_2(\vg) f(v)$, for $\vg \in G$ and $v \in V_1$. 

\item $\M_c(G)$ - category of compact representations. The objects are pairs $(\pi, V)$ where $(V, \sT)$ is a linearly topologized complete $R$-module, such that for every open $R$-submodule $W \leq V$, $V/W$ is an $R$-module of finite length. The morphisms are defined in the same way as in $\M_{ltc}(G)$.
\end{itemize}
We give further details on the topologies of the three categories defined above, as well as the explicit meaning of the continuity condition, in the sections to follow.

\section{Category of Discrete Representations}\label{smooth}

Fix a topological group $G$ and a closed subgroup $H \leq G$. We study the category $\M_d(G)$ of discrete representations of $G$ and $\M_d(H)$ of discrete representations of $H$. These are connected by the restriction functor:
$$ \res_H^G: \M_d(G) \to \M_d(H),\ \text{defined by}\ (\pi, V) \mapsto (\pi\mid_H, {}_{H}V), $$
where $\pi\mid_H: H \to \aut_R(V)$ is the restriction of $\pi: G \to \aut_R(V)$ to $H$ and ${}_{H}V$ denotes $V$ as an $H$-module.

Let $(\pi, V)$ be a representation of $G$ and $\phi: G \x V \to V$, given by $(\vg,v) \mapsto \pi(\vg)v$, be the map induced by the action of $G$. 
A discrete representation $(\pi, V)$ of $G$ is \emph{continuous} if for every $v \in V$, $\phi^{-1}(v)$ is open in $G$. In other words, for every $v \in V$, there exists an open set $K_v \subset G$, such that $\pi(\vk)v=v$, for every $\vk \in K_v$.

Note that $1_G$ always satisfies $\pi(1_G)v=v$.  Since the group topology is determined by the fundamental neighbourhoods of identity, without loss of generality assume that $K_v$ is an open neighbourhood of $1_G$. We could go even further - for every $v \in V$ we can construct an open subgroup $\widetilde{K_v} \leq G$ generated by $K_v$. Then clearly $\pi(\vk)v=v$, for every $\vk \in \widetilde{K_v}$.


 
If the topology of $G$ is locally compact and totally disconnected, then $\M_d(G)$ is the category of smooth representations of $G$. The smoothness condition there states that Stab$_G(v)$ is open in $G$ for every $v \in V$, which is precisely our continuity condition. 
 
The main goal of this section is to determine when the restriction functor $\res_H^G$ has a left and a right adjoint in $\M_d(G)$. 

We start by looking at the right adjoint to $\res_H^G$.
We claim that it exists and is given by the induction functor $\ind_H^G: \M_d(H) \to \M_d(G)$. We define $\ind_H^G$ generalising the construction of smooth induction for locally compact totally disconnected groups \cite{LL}:

Fix $(\s, W) \in \M_d(H)$. Consider the $R$-module $\widehat{W}$ of all left $H$-equivariant functions $f: G \to W$, i.e., which satisfy the property 
\begin{itemize}
\item[(i)] $f(\vh \vg)=\s(\vh)f(\vg)$, for all $\vh \in H$ 
and $\vg \in G$.
\end{itemize}
Within $\widehat{W}$ we find an $R$-submodule $\widetilde{W}$ consisting of ``continuous functions'', i.e., functions with the additional property
\begin{itemize}
\item[(ii)] $f \in \widetilde{W}$
if and only if there exists an open neighbourhood $K_f$ of $1_G$, such that $f(\vg \vk)=f(\vg)$, for all $\vg \in G$ and $\vk \in K_f$.
\end{itemize}
The homomorphism 
$\rho: G \to\mbox{Aut}_R(\widehat{W})$, given by $\rho(\vg)f: \vx \mapsto f(\vx \vg)$, for $\vg,\vx \in G$ and $f \in \widehat{W}$, defines a $G$-action on both $\widehat{W}$ and $\widetilde{W}$, thus making $(\rho, \widetilde{W})$ a continuous representation of $G$, i.e., $(\rho, \widetilde{W}) \in \M_d(G)$. The pair 
$(\rho, \widetilde{W})$ is called the representation of $G$ 
\emph{continuously induced by $\s$} and is denoted $\ind_{H} ^{G} (\s)$. Using this construction we define the functor $\ind_H^G: \M_d(H) \to \M_d(G)$. We claim that this is the right adjoint we are looking for. 

\begin{lemma}
For a topological group $G$ and a closed subgroup $H \leq G$, the functor $\ind_H^G: \M_d(H) \to \M_d(G)$ defined above is right adjoint to the restriction functor.
\end{lemma}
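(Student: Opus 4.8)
The plan is to produce, for every $(\pi,V)\in\M_d(G)$ and $(\s,W)\in\M_d(H)$, an explicit bijection
\begin{equation*}
\Hom_{\M_d(G)}\bigl(V,\ind_H^G(\s,W)\bigr)\;\cong\;\Hom_{\M_d(H)}\bigl(\res_H^G(\pi,V),(\s,W)\bigr),
\end{equation*}
built in both directions from evaluation at the identity, and then to check naturality in both arguments. Recall that $\ind_H^G(\s,W)=(\rho,\widetilde W)$ with $\rho(\vg)f\colon\vx\mapsto f(\vx\vg)$.

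In one direction, for a $G$-morphism $f\colon V\to\widetilde W$ I set $\Phi(f)=\mathrm{ev}_{1_G}\circ f\colon V\to W$, $v\mapsto f(v)(1_G)$. This is $R$-linear, and it is $H$-equivariant: using property (i) of the functions in $\widehat W$ one has $f(v)(\vh)=\s(\vh)f(v)(1_G)$ for $\vh\in H$, hence $\Phi(f)(\pi(\vh)v)=f(\pi(\vh)v)(1_G)=(\rho(\vh)f(v))(1_G)=f(v)(\vh)=\s(\vh)\Phi(f)(v)$. No continuity condition needs checking because $W$ carries the discrete topology, so $\Phi(f)\in\Hom_{\M_d(H)}(\res_H^G V,W)$.

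In the other direction, for an $H$-morphism $\a\colon V\to W$ I define $\Psi(\a)(v)\colon\vg\mapsto\a(\pi(\vg)v)$. Property (i) is immediate from $H$-equivariance of $\a$: $\Psi(\a)(v)(\vh\vg)=\a(\pi(\vh)\pi(\vg)v)=\s(\vh)\Psi(\a)(v)(\vg)$. The one step that genuinely uses the hypotheses on the source category is verifying property (ii), i.e.\ that $\Psi(\a)(v)\in\widetilde W$: by continuity of $(\pi,V)$ there is an open subgroup $\widetilde{K_v}\le G$ with $\pi(\vk)v=v$ for all $\vk\in\widetilde{K_v}$, and then $\Psi(\a)(v)(\vg\vk)=\a(\pi(\vg)\pi(\vk)v)=\a(\pi(\vg)v)=\Psi(\a)(v)(\vg)$, so $K_{\Psi(\a)(v)}:=\widetilde{K_v}$ works. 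A direct computation then gives $R$-linearity and $G$-equivariance, $\Psi(\a)(\pi(\vx)v)(\vg)=\a(\pi(\vg\vx)v)=(\rho(\vx)\Psi(\a)(v))(\vg)$, so $\Psi(\a)\in\Hom_{\M_d(G)}(V,\ind_H^G W)$.

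Finally I would check $\Phi$ and $\Psi$ are mutually inverse: $\Phi(\Psi(\a))(v)=\Psi(\a)(v)(1_G)=\a(\pi(1_G)v)=\a(v)$, and $\Psi(\Phi(f))(v)(\vg)=\Phi(f)(\pi(\vg)v)=f(\pi(\vg)v)(1_G)=(\rho(\vg)f(v))(1_G)=f(v)(\vg)$, so $\Psi(\Phi(f))=f$. Naturality in $W$ (post-composing with an $H$-morphism $W\to W'$) and in $V$ (pre-composing with a $G$-morphism $V'\to V$) follows by a routine diagram chase using the explicit formulas for $\Phi$ and $\Psi$. I do not anticipate a real obstacle; the only point requiring care is the verification that $\Psi(\a)(v)$ satisfies the continuity condition (ii), which is exactly where discreteness and continuity of $V$ are used, together with the already-established fact that $\rho$ preserves the submodule $\widetilde W\subseteq\widehat W$.
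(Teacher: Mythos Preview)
Your proof is correct and follows essentially the same approach as the paper: the maps $\Phi$ and $\Psi$ you define are exactly the paper's $\alpha$ and $\beta$, and your verification that they are mutually inverse $H$- and $G$-equivariant maps is precisely what the paper leaves as ``routine to check.'' If anything, your write-up is more complete, since you spell out the check of condition (ii) for $\Psi(\a)(v)$ and mention naturality, both of which the paper omits.
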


\begin{proof}
For continuous representations $(\pi,V) \in \M_d(G)$ and $(\s, W) \in \M_d(H)$ and notation as above, we want
$$\Hom_G(V, \widetilde{W}) \cong \Hom_H({}_{H}V, W).$$
We have maps:
$\alpha: \Hom_G(V, \widetilde{W}) \to \Hom_H({}_{H}V, W)$ given by:
$$ \psi: V \to \widetilde{W}  \mapsto \quad \widetilde{\psi}:V \to W $$
  $$  \psi:  v \mapsto \psi_v \quad \mapsto \quad \widetilde{\psi}:  v \mapsto \psi_v(1_G),$$
 
and $\beta: \Hom_H(V, W) \to \Hom_G(V, \widetilde{W})$ given by:
$$ \phi: V \to W  \mapsto \quad \widetilde{\phi}:V \to \widetilde{W} $$
 with
$$ \widetilde{\phi}:  v \mapsto f_v: \vg \mapsto \phi(\vg \cdot v).$$
Clearly $\widetilde{\psi} \in \Hom_H (V, W)$ since $\psi_v \in \widetilde{W}$.
Similarly for $\widetilde{\phi} \in \Hom_G(V, \widetilde{W})$. It is routine to check that $\alpha$ and $\beta$ are inverse to each other.
\end{proof}




Now we move on to the case of the left adjoint. This is more subtle. Let us lay out our conventions first. We use the following standard terminology: 
\begin{itemize}
\item A functor $\sF: \sC \to \sD$ is called: 
\begin{itemize}
\item \emph{continuous} if it preserves small limits,
\item \emph{cocontinuous} if it preserves small colimits.
\end{itemize}
\item A category $\sC$ is called:
\begin{itemize}
\item \emph{complete} if all small diagrams have limits in $\sC$, 
\item \emph{cocomplete} if all small diagrams have colimits in $\sC$.
\end{itemize}
\end{itemize}


Since limits can be constructed as equalizers of products, a category $\sC$ is complete if all morphisms in $\sC$ have equalizers and $\sC$ is closed under arbitrary products \cite{mac}. Hence, to check continuity of a functor $\sF: \sC \to \sD$, it is sufficient to check that $\sF$ preserves those (respectively coproduct and coequalizers for the cocontinuous case).

Recall the following criterion for existence of a left adjoint to a functor \cite{mac}:

\begin{thm*}[The Freyd Adjoint Functor Theorem]
Given a locally small, complete category $\sC$ 
a functor $\sF: \sC \to \sD$ has a left adjoint if and only if it preserves all small limits and satisfies the following condition:
\begin{itemize}
\item[(SSC)] For each object $d \in \sD$ there is a small set $\I$ and an $\I$-indexed family of morphisms $f_i: d \to \sF(c_i)$, such that every morphism $h: d \to \sF(c)$ can be written as a composite $h=\sF(t) \circ f_i$, for some index $i \in \I$ and some $t: c_i \to c$.
\end{itemize}
Dualise the statement to obtain a criterion for a right adjoint.
\end{thm*}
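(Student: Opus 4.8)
The plan is to treat the two implications separately; the forward direction is formal, and essentially all the content is in the converse. Suppose first that $\sF$ has a left adjoint $\sL$, with unit $\eta: \id_{\sD} \to \sF \circ \sL$. Right adjoints preserve whatever limits exist, so $\sF$ preserves all small limits. For (SSC), given $d \in \sD$ take the one-element family consisting of $\eta_d: d \to \sF(\sL(d))$: any $h: d \to \sF(c)$ equals $\sF(\bar{h}) \circ \eta_d$, where $\bar{h}: \sL(d) \to c$ is the transpose of $h$ under the adjunction. So (SSC) holds with index set of size one.

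For the converse I would first reformulate the goal in terms of comma categories. For $d \in \sD$, let $(d \downarrow \sF)$ denote the category whose objects are pairs $(c, h)$ with $c \in \sC$ and $h: d \to \sF(c)$, and whose morphisms $(c,h) \to (c',h')$ are maps $t: c \to c'$ in $\sC$ with $\sF(t) \circ h = h'$. A left adjoint to $\sF$ exists precisely when every $(d \downarrow \sF)$ has an initial object: one then sets $\sL(d)$ to be its underlying $\sC$-object, the structure map becomes the unit $\eta_d$, and functoriality of $\sL$ and naturality of $\eta$ are forced by initiality. So the task becomes to build an initial object in each $(d \downarrow \sF)$.

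The next step is to observe that $(d \downarrow \sF)$ is complete. Given a small diagram in it, form the limit $L$ of the underlying diagram in $\sC$; since $\sF$ preserves this limit, the structure maps $d \to \sF(c_j)$ assemble, by the universal property of $\sF(L) = \lim \sF(c_j)$, into a single map $d \to \sF(L)$, and this yields a limiting cone in $(d \downarrow \sF)$. Now (SSC) is exactly the assertion that the solution set $\{(c_i, f_i)\}_{i \in \I}$ is a weakly initial family in $(d \downarrow \sF)$. Forming the product $P = \prod_{i \in \I} c_i$ in $\sC$ (a small product, so it exists) and using $\sF(P) = \prod_i \sF(c_i)$, the $f_i$ induce a map $f: d \to \sF(P)$ making $(P,f)$ a single weakly initial object of $(d \downarrow \sF)$.

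The decisive step is to promote $(P,f)$ to a genuinely initial object, and this is where local smallness of $\sC$ is indispensable: it guarantees that the collection of endomorphisms of $(P,f)$ in the comma category is a \emph{set}, so that the equalizer $v: A \to P$ of all of them is a legitimate small limit, existing by the completeness just established. Note $v$ is monic. Then $A$, with its structure map, is initial: it is weakly initial since any morphism out of $P$ precomposes with $v$; and given $g, h: A \to X$, their equalizer $e: E \to A$ in $(d \downarrow \sF)$ admits a map $k: P \to E$ by weak initiality of $P$, so $v \circ e \circ k$ is an endomorphism of $(P,f)$, hence $v \circ (e \circ k \circ v) = v$, and since $v$ is monic $e \circ k \circ v = \id_A$, whence $g = g \circ e \circ k \circ v = h \circ e \circ k \circ v = h$. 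Thus initial objects exist throughout and $\sL$ is constructed. The dual criterion for a right adjoint follows by applying the theorem to $\sF^{\mathrm{op}}: \sC^{\mathrm{op}} \to \sD^{\mathrm{op}}$. I expect the subtle point to be precisely this last passage: both the reliance on local smallness to keep the equalizer of all endomorphisms legitimate, and the bookkeeping that all the ambient limits really are computed in $\sC$ and preserved by $\sF$.
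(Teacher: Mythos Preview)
Your argument is correct and is precisely the standard proof via comma categories and the initial-object lemma, as in Mac Lane. Note, however, that the paper does not give its own proof of Freyd's theorem: it simply recalls the statement with a citation to \cite{mac} and then applies it as a tool, so there is no in-paper proof to compare against. Your write-up is essentially the argument one finds in that reference.
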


We wish to use Freyd's Theorem to determine whether the restriction functor has a left adjoint. First note that (SSC) holds in $\M_d(G)$: it just says that every map in $\M_d(G)$ can be factored through a quotient.
Also note that $\M_d(G)$ is abelian, so equalizers of all morphisms exist. Since $\res_H^G$ does not change the morphisms between the objects, it commutes with equalizers. 

The next step is to check whether $\M_d(G)$ is closed under arbitrary products. Take a collection $\{V_i\}_{i \in \I} \in \M_d(G)$, for some arbitrary set $\I$.
Let $V \coloneq \prod\limits_{i \in \I} V_i$ denote the product of $V_i$ as $R$-modules. $V$ remains a discrete space with respect to the box topology. It also has an obvious $G$-module structure - $G$ acts componentwise:
$$ \vg \cdot (v_1,v_2,...,v_n,..)=(\vg \cdot v_1,\vg \cdot v_2,...,\vg \cdot v_n,..),\ \text{for}\ \vg \in G, v_i \in V_i.$$
However, the action is not necessarily continuous:

Fix $v \coloneq (v_1,v_2,...,v_n,..) \in V$. Since $V_i \in \M_d(G)$, for every $v_i \in V_i$, there exists an open neighbourhood $K_{v_i}$ of $1_G$, such that $\vk \cdot v_i=v_i$, for all $\vk \in K_{v_i}$ and $i \in \I$. Thus, $K_v=\bigcap_{i \in \I} K_{v_i}$ has the property that $\vk \cdot v=v$, for all $\vk \in K_v$. But as $\I$ is chosen arbitrarily $K_v$ does not have to be open. Therefore, the representation is not continuous at $v$ and $V \notin \M_d(V)$. However, consider the \emph{continuous} part of $V$, i.e., 
$$V^{sm} \coloneq \{v \in V\ |\ \text{there exists}\ K_v\ \text{open in}\ G,\ \text{such that}\ \vk \cdot v=v,\ \text{for all}\ \vk \in K_v\}.$$
Clearly $V^{sm}$ is a continuous representation of $G$. We claim the following:

\begin{lemma}
Every collection $\{V_i \}_{i \in \I} \in \M_d(G)$, $\I$ an arbitrary set, has a product in $\M_d(G)$ given by $(\prod\limits_{i \in \I} V_i)^{sm}$. In other words, $\M_d(G)$ is complete.
\end{lemma}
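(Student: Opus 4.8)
The plan is to verify that $(\prod_{i\in\I} V_i)^{sm}$, which we write $V^{sm}$, together with the restricted projection maps $p_j\colon V^{sm}\to V_j$, satisfies the universal property of the product in $\M_d(G)$. First I would check that $V^{sm}$ is a genuine object of $\M_d(G)$: it is an $R$-submodule of $V=\prod_{i\in\I} V_i$ (a sum or scalar multiple of elements fixed by open subgroups is fixed by the intersection, which is still open), it is $G$-stable (if $\vk\cdot v=v$ for $\vk\in K_v$ then $K_{\vg\cdot v}$ can be taken to be $\vg K_v \vg^{-1}$, still open, using that conjugation is a homeomorphism of $G$), it carries the discrete topology inherited as a subspace of the discrete space $V$, and by its very definition the $G$-action on it is continuous. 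The projections $p_j$ are $R$-linear and $G$-equivariant since $G$ acts componentwise, and they land in $V_j$ with no continuity issue because everything is discrete.

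Next I would establish the universal property. Given any $W\in\M_d(G)$ with $G$-equivariant $R$-linear maps $q_j\colon W\to V_j$, the product-of-$R$-modules universal property gives a unique $R$-linear $q\colon W\to V$ with $p_j\circ q=q_j$, defined by $q(w)=(q_j(w))_{j\in\I}$, and this $q$ is automatically $G$-equivariant since each $q_j$ is. The key point is that $q$ factors through $V^{sm}$: for $w\in W$ there is an open subgroup $\widetilde{K_w}\leq G$ fixing $w$ (continuity of $W$), hence $\widetilde{K_w}$ fixes each $q_j(w)$ by equivariance, hence it fixes $q(w)\in V$; therefore $q(w)\in V^{sm}$. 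This gives the required factorization $q\colon W\to V^{sm}$, still $G$-equivariant and (trivially) continuous, and it is unique because it is already unique as a map of $R$-modules into $V$. The final sentence, that $\M_d(G)$ is complete, then follows from the earlier remarks: $\M_d(G)$ is abelian so all equalizers exist and are preserved by the forgetful functor, and limits are built from products and equalizers.

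The main obstacle — really the only subtle point — is checking that $V^{sm}$ is $G$-stable and that the open subgroup fixing $w$ in $W$ also fixes $q(w)$; both rely on the interplay between the componentwise $G$-action and the fact that conjugation by a fixed $\vg\in G$ is a homeomorphism of $G$, so that $\vg K \vg^{-1}$ is open whenever $K$ is. Everything else is a routine transfer of the universal property of products of $R$-modules through the forgetful functor $\M_d(G)\to R\text{-}\mathrm{Mod}$, noting that this functor reflects the relevant structure because the topology is discrete throughout and equivariance is preserved by the componentwise action.
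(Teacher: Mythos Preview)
Your proposal is correct and follows essentially the same approach as the paper's proof: verify that $V^{sm}$ lies in $\M_d(G)$, then use the universal property of the product in $R$-\textbf{mod} and check that the induced map is $G$-equivariant and lands in $V^{sm}$. In fact you are more explicit than the paper on two points the paper leaves implicit---the $G$-stability of $V^{sm}$ via conjugation, and the reason why the map $q\colon W\to V$ factors through $V^{sm}$---so nothing further is needed.
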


\begin{proof}
Let $V^{sm} \coloneq (\prod\limits_{i \in \I} V_i)^{sm}$. Denote by $p_i: V^{sm} \to V_i$ the canonical projections in $\M_d(G)$. Let $A \in \M_d(G)$ and $f_i: A \to V_i$ be a family of morphisms in $\M_d(G)$ indexed by $\I$. As $V^{sm} \leq \prod_{i \in \I} V_i \in R-$\textbf{mod}, there exists a unique $R$-module homomorphism $f: A \to V^{sm}$, such that $p_i \circ f=f_i$, for all $i \in \I$. It is also a $G$-map:
$$p_i(f (\vg \cdot a))=f_i(\vg \cdot a)=\vg \cdot f_i(a)=\vg \cdot (p_i(f(a))=p_i(  \vg \cdot (f(a)),\ \vg \in G, a \in A,$$
i.e.,
$$f(\vg \cdot a)= \vg \cdot f(a).$$ Thus, $f$ is a morphism in $\M_d(G)$ and the universal property of the product is satisfied.
\end{proof}
We claim that even though $\M_d(G)$ is complete, the restriction functor does not always have a left adjoint.

\begin{thm}\label{l-adj d}
Let $G$ be a topological group and $H$ a closed subgroup of $G$. Then 
$$ \res_H^G: \M_d(G) \to \M_d(H) $$
has a left adjoint if $H$ is also open. 
\end{thm}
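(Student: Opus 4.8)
The plan is to verify the hypotheses of the dual form of the Freyd Adjoint Functor Theorem: since $\M_d(G)$ is complete by the previous lemma and locally small, $\res_H^G$ will have a left adjoint as soon as we check that it is \emph{cocontinuous} (preserves small colimits) and satisfies the dual solution-set condition (SSC). The dual of (SSC) in this setting is again harmless: it amounts to saying that every morphism in $\M_d(H)$ factors through a quotient object, which it does since $\M_d(H)$ is abelian and every morphism factors as an epimorphism followed by a monomorphism. So the real content is cocontinuity, and this is where the hypothesis that $H$ be open enters.

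Since $\M_d(G)$ and $\M_d(H)$ are abelian, it suffices to show $\res_H^G$ preserves coequalizers and arbitrary coproducts. Coequalizers are no trouble: $\res_H^G$ leaves the underlying $R$-module and all the morphisms unchanged, so it commutes with coequalizers just as it did with equalizers. The crux is coproducts. A priori the coproduct in $\M_d(G)$ of a family $\{V_i\}_{i\in\I}$ is computed as the $R$-module direct sum $\bigoplus_{i\in\I} V_i$ with componentwise $G$-action, and one checks this direct sum is already a discrete continuous $G$-module (any element lies in a finite subsum, so its stabilizer is a finite intersection of open subgroups, hence open) — the "continuous part" subtlety that arose for products does not arise for coproducts. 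The same computation shows $\bigoplus_{i\in\I} V_i$ with the componentwise $H$-action is the coproduct in $\M_d(H)$. Hence $\res_H^G\bigl(\bigoplus_i V_i\bigr) = \bigoplus_i \res_H^G(V_i)$ on the nose, and $\res_H^G$ preserves coproducts.

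Wait — if $\res_H^G$ preserves coproducts for \emph{any} closed $H$, where does openness of $H$ get used? It does not: in fact $\res_H^G$ is cocontinuous for every closed subgroup, so by Freyd it \emph{always} has a left adjoint, and the statement of the theorem only claims existence under the stronger hypothesis that $H$ is open (a weaker conclusion than what Freyd gives, which is logically fine). I expect the author's proof in fact establishes the stronger unconditional statement, or else constructs the left adjoint explicitly — the explicit model being compact induction $\coind_H^G(\s) = $ the functions $f\colon G\to W$ with $f(\vh\vg)=\s(\vh)f(\vg)$ and \emph{compact support modulo $H$} (equivalently, supported on finitely many cosets, which is the natural condition when $H$ is open and $H\backslash G$ is discrete), with $G$ acting by right translation. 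When $H$ is open this compactly-supported induction is manifestly an object of $\M_d(G)$ and a direct adjunction computation, mirroring the proof for $\ind_H^G$ above, exhibits it as the left adjoint to $\res_H^G$.

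Accordingly I would present the proof in two moves: first invoke Freyd's theorem via the cocontinuity argument (checking coproducts and coequalizers as above) to get abstract existence of a left adjoint; and second — if an explicit description is wanted, and because openness of $H$ makes it clean — identify the left adjoint with the compact-induction functor $\coind_H^G$ and verify the adjunction $\Hom_G(\coind_H^G W, V)\cong\Hom_H(W, {}_HV)$ directly. The main obstacle, and the only place real care is needed, is the coproduct computation: one must check both that the underlying $R$-module direct sum with componentwise action is genuinely continuous (so no passage to a "continuous part'' is forced, unlike for products) and that the universal property holds in $\M_d(G)$, i.e. that a compatible family of $G$-maps out of the $V_i$ assembles to a $G$-map out of $\bigoplus_i V_i$ — this last point is routine but is the hinge of the whole argument.
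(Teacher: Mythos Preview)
You have the direction of Freyd's theorem reversed. As the paper states it, a functor $\sF:\sC\to\sD$ from a complete locally small category has a \emph{left} adjoint iff $\sF$ is \emph{continuous} (preserves small limits) and satisfies (SSC); the dual form says that $\sF$ from a cocomplete category has a \emph{right} adjoint iff $\sF$ is cocontinuous. More basically: if $L\dashv R$ then $R$ preserves limits and $L$ preserves colimits, so a necessary condition for $\res_H^G$ to admit a left adjoint is that $\res_H^G$ preserve \emph{products}, not coproducts. Your puzzlement (``where does openness of $H$ get used? It does not'') is exactly the symptom of this reversal.

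The paper's proof checks the correct thing: that
\[
\Big(\prod_{i\in\I}\res_H^G(V_i)\Big)^{sm}\;\cong\;\res_H^G\Big(\big(\prod_{i\in\I}V_i\big)^{sm}\Big),
\]
and openness of $H$ is precisely what makes this work. If $w$ lies in the $H$-continuous part of $\prod_i {}_H V_i$, there is an open $K_w\subseteq H$ fixing $w$; when $H$ is open in $G$, this $K_w$ is already open in $G$, so $w$ lies in the $G$-continuous part and the two sides agree. The example immediately following the theorem shows this genuinely fails for a merely closed $H$: one can arrange a vector $v$ fixed by a closed-but-not-open intersection $\bigcap_i\widetilde{K_i}$, so $v$ is in the $H$-smooth part but not in the $G$-smooth part. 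Thus $\res_H^G$ is \emph{not} continuous in general, and the left adjoint need not exist.

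Your observation that $\res_H^G$ preserves coproducts for every closed $H$ is correct, but via the dual Freyd theorem it only re-proves that $\res_H^G$ has a \emph{right} adjoint for any closed $H$ --- exactly the content of the explicit $\ind_H^G$ construction earlier in the section. Your second move, the explicit compact-induction model for $\coind_H^G$ when $H$ is open, is a legitimate independent proof of the theorem (and the paper mentions it in the example afterward), but it does not salvage the Freyd argument as you have set it up.
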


\begin{proof}
Taking into consideration the discussion before Lemma \ref{prod}, to establish the existence of a left adjoint, we have to show that $\res_H^G$ is continuous, i.e.:
$$\bigg( \prod\limits_{i \in \I} \res_H^G(V_i) \bigg)^{sm} \cong \res_H^G \big( (\prod\limits_{i \in \I} V_i)^{sm} \big),\ \text{for some indexing set}\ \I.$$
This is the same as showing that every $W \in  \M_d(H)$, such that $W=\big(\prod \res_H^G(V_i) \big)^{sm}$, for some $V_i \in \M_d(G)$, is also an element of $\M_d(G)$.

Take such $W \in \M_d(H)$. Then for every $w \in W$ there exists an open neighbourhood $K_w$ of $1_H$, such that $\vk \cdot w=w$, for all $\vk \in K_w$. As $H$ is given the subspace topology, there is an open $U \sset G$, such that $K_w=U \cap H$. But since $H$ is open in $G$, then $K_w$ is an open neighbourhood of $1_G$ and thus $W \in \M_d(G)$. 
\end{proof}
Thus, for an open subgroup $H \leq G$ , we have a functor
$$\coind_H^G: \M_d(H) \to \M_d(G),$$
which is left adjoint to the restriction functor.

\begin{ex}
Suppose the topology on $G$ is locally compact and totally disconnected. Then $\coind_H^G$ is given by compact induction of representations, i.e.,
$$\coind_H^G: \M_d(H) \to \M_d(G), \quad (\s,W) \mapsto (\rho, \widetilde{W}),$$
where $\widetilde{W}$ is the space of all left $H$-equivariant, continuous functions $f: G \to W$, which have compact modulo $H$ support, and $\rho( \vg) f: \vx \to f(\vx \vg)$ \cite{LL}. It is clear from the constructions that for a locally compact totally disconnected $G$ and $H$, such that $H \setminus G$ is compact, $\ind_H^G \cong \coind_H^G$ \cite{LL}, \cite{Vig1}.  
\end{ex}

The next example shows that if $H$ is not open in $G$, then $\coind_H^G$ is not always defined.
\begin{ex}
Take $\{ V_i \}_{i \in \I} \in \M_d(G)$. Fix $v=(v_1,v_2,...,v_n,..) \in \prod\limits_{i \in \I}V_i $ with $K_i$ corresponding open neighbourhoods of $1_G$, such that $\vk_i \cdot v_i=v_i$, for all $\vk_i \in K_i$. For each $i \in \I$ construct an open subgroup $\widetilde{K_i} \leq G$, generated by $K_i$. Then $\vk \cdot v=v$, for all $\vk \in \widetilde{K} \coloneq \bigcap_{i \in \I} \widetilde{K_i}$. However, as $\I$ is an arbitrary set $\widetilde{K}$ is not necessarily open in $G$. Moreover, it is closed as every $\widetilde{K_i}$ is. Taking $H \coloneq \widetilde{K}$ we have $v \notin (\prod\limits_{i \in \I}V_i)^{sm}$, and thus $v \notin \res_H^G (\prod\limits_{i \in \I}V_i)^{sm}$. But as $H$ is open in $H$, $v \in ( \prod\limits_{i \in \I}( \res_H^G(V_i) )^{sm}$. Thus, $\res_H^G$ fails to be continuous and $\coind_H^G$ is not defined.
\end{ex}




\section{Linearly topologized and complete $G$-modules} \label{two}

Let $R$ be an associative ring with $1$, $V$ a topological $R$-module and $G$ a topological group. In this section we investigate the category $\M_{ltc}(G)$ of linearly topologized and complete $R$-modules which admit a continuous action of $G$.


Let $(\pi, V) \in \M_{ltc}(G)$. Then the map $\phi: G \x V \to V$ defined by $\phi: (\vg, v) \mapsto \pi(\vg)v$ is continuous. 
In particular, for an open $U \sset V$ there exists an open neighbourhood $K$ of $1_G$ and an open submodule $W \leq V$, such that if $\vg \cdot x \in U$, then $K\vg \cdot (x +W) \sset U$, for some $\vg \in G$ and $x \in V$.


Fix a closed subgroup $H \leq G$. As in Section \ref{smooth}, given a continuous representation $(\pi, V)$ of $G$, we obtain a representation of $H$ by restricting the map $\phi: G \x V \to V$ to $H$. As a restriction of a continuous map it remains continuous. Thus, once again we have a functor
$$\res_H^G: \M_{ltc}(G) \to \M_{ltc}(H).$$
We wish to investigate the existence of adjoints to this functor. We start with the left adjoint. Following the same strategy as in Section \ref{smooth} we begin by constructing arbitrary products in $\M_{ltc}(G)$.

\begin{lemma} \label{prod}
Arbitrary products exist in $\M_{ltc}(G)$. More precisely, the product of a collection of objects of $\M_{ltc}(G)$ is their product  
as $R$-modules, endowed with the product topology. 
\end{lemma}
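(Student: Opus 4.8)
The plan is to show that the $R$-module product $V := \prod_{i \in \I} V_i$, equipped with the product topology, together with the componentwise $G$-action, lies in $\M_{ltc}(G)$ and satisfies the universal property of the product in that category. First I would check that $V$ is a linearly topologized complete $R$-module: the product topology on $\prod V_i$ has a fundamental system of neighbourhoods of $0$ given by sets of the form $\prod_{i \in S} W_i \times \prod_{i \notin S} V_i$ where $S \subseteq \I$ is finite and $W_i \leq V_i$ is an open submodule; each such set is an $R$-submodule of $V$, so the topology is linear. Completeness of a product of complete (Hausdorff) uniform spaces in the product uniformity is standard, and the $R$-module operations are continuous coordinatewise, so $(V,\sT)$ is a topological $R$-module.

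Next I would verify that the componentwise $G$-action $\phi : G \times V \to V$ is continuous, which is the substantive point. Using the description of the product topology above, it suffices to check continuity of the composite $G \times V \to V \to V/\big(\prod_{i\in S} W_i \times \prod_{i\notin S} V_i\big)$ for each finite $S$ and each choice of open $W_i \leq V_i$; but this composite factors through the finite product $\prod_{i \in S}(G \times V_i \to V_i \to V_i/W_i)$, each factor of which is continuous because $(\pi_i, V_i) \in \M_{ltc}(G)$. Since a finite product of continuous maps is continuous and preimages of a neighbourhood basis suffice, $\phi$ is continuous, so $(\pi, V) \in \M_{ltc}(G)$.

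Finally I would establish the universal property. Given $A \in \M_{ltc}(G)$ and morphisms $f_i : A \to V_i$ in $\M_{ltc}(G)$, the universal property of the product of $R$-modules yields a unique $R$-linear $f : A \to V$ with $p_i \circ f = f_i$, where $p_i$ are the coordinate projections; the same computation as in the proof of Lemma \ref{prod} (the discrete case) shows $f$ is $G$-equivariant, since equivariance can be checked after composing with each $p_i$. It remains to see $f$ is continuous: a preimage under $f$ of a basic open set $\prod_{i \in S} W_i \times \prod_{i \notin S} V_i$ equals $\bigcap_{i \in S} f_i^{-1}(W_i)$, a finite intersection of open sets of $A$ since each $f_i$ is continuous. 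Each projection $p_i$ is evidently continuous and $G$-equivariant, so $(V, \{p_i\})$ is the product in $\M_{ltc}(G)$.

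I expect the main obstacle to be purely bookkeeping around the product topology — specifically, being careful that continuity of the $G$-action need only be tested against the finite-support basic neighbourhoods, which is exactly what makes the infinite product work here (in contrast to the box topology, which fails, as the discrete case already illustrates). No single step is deep, but the continuity-of-action argument is where the linear-topology hypothesis on each $V_i$ is genuinely used and should be written with care.
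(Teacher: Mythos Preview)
Your proposal is correct and follows essentially the same route as the paper: describe the basic open submodules in the product topology, establish completeness (the paper does this explicitly via nets, you cite the standard fact), verify continuity of the $G$-action by reducing to finitely many coordinates, and check the universal property with $G$-equivariance tested through the projections. The only cosmetic differences are that the paper phrases continuity of the action in the explicit ``for $U$ open there exist $N$ and $W$'' form rather than via the quotient factorization, and your reference to ``Lemma \ref{prod} (the discrete case)'' should point instead to the unlabeled product lemma in Section~\ref{smooth}.
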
 

\begin{proof}
For an arbitrary collection $\{(\pi_i, V_i) \}_{i \in \I}$ of elements of $\M_{ltc}(G)$, let $V \coloneq \prod_{i \in \I} V_i$ denote their product in \textbf{$R$-mod} which is a topological $R$-module with respect to the product topology \cite{Bou}.

Following Lefschetz we show that the topology on $V$ is linear \cite{Lef}. Let $\{U^j_i\}, j \in \J_i$ be a base of neighbourhoods of $0$ in $V_i$, consisting of open submodules. Then for any finite subset $\mathcal{K} \subset \I$, $\prod_{k \in \mathcal{K}} U^j_k \x \prod_{i \in \I \setminus \mathcal{K}} V_i$ is a base of  neighbourhoods of $0$ in $V$ consisting of open submodules, giving the linearity of the product topology.


Let $z_n=(z_n^1,z_n^2,..,z_n^i,...), z_n^i \in V_i$ and $n \in \mathbb{L}$ for an ordinal number $\mathbb{L}$, be a Cauchy net in $V$. Since all $V_i$ are complete, $z^i_n$ is a convergent Cauchy net in $V_i$. Let
$$z^i \coloneq \lim_{n \in \mathbb{L}} z^i_n.$$
Set $z=(z^1,z^2,...,z^i,...)$. Let $U_i$ be an open neighbourhood of $z^i$ in $V_i$ and define $U= \prod_{i \in \J} U_i \x \prod_{i \in I \setminus \J} V_i$, for some finite subset $\J \subset \I$. Since each net $z^i_n$ is convergent there exists some $l_i$, such that $z^i_n \in U_i$, for all $n \geq l_i$ in $\mathbb{L}$. Pick the largest $l_i, i \in \J$, say $l$. Then for all $n \geq l$ in $\mathbb{L}$, $ z^i_n \in U_i$ for all $i \in \I$. Thus, $z \in U$ and $z_n$ is convergent in $V$ with $$\lim_{n \in \mathbb{L}}z_n=z.$$


The $G$-action on $V$ is componentwise. We want to show it is continuous.
Let  $U \coloneq \prod_{i \in \J} U_i \x \prod_{i \in \I \setminus \J} V_i$ with $\J \subset \I$ finite. Then $U \sset V$ is open.
 

Since all $V_i$ are continuous $G$-modules, for every $i \in \I$ there exists an  open neighbourhood $N_i$ of $1_G$ and an open submodule $W_i \leq V_i$, such that if $\vg \cdot x_i \in U_i$, for $\vg \in G, x_i \in V_i$, then $N_i \vg \cdot(x_i + W_i) \sset U_i$. Fix $N \coloneq \bigcap_{i \in \J} N_i$. Since $\J$ is finite, $N$ is an open neighbourhood of $1_G$ and furthermore $N \vg \cdot (x_i + W_i) \sset U_i$, for all $i \in \J$.
Let $W \coloneq \prod_{i \in \J} W_i \x \prod_{i \in \I \setminus \J} V_i$. This is an open submodule of $V$. Thus, we found $N \sset G$ and and open submodule $W \leq V$, such that for $\vg \cdot x \in U$, with $x=(x_1,..,x_i,..)$, $N \vg \cdot (x+W) \in U$. Hence, $V \in \M_{ltc}(G)$.

Let $A \in \M_{ltc}(G)$, $p_i: V \to V_i$ be the projections in $\M_{ltc}(G)$ and $f_i: A \to V_i$ be a family of morphisms in $\M_{ltc}(G)$ indexed by $\I$. As $V$ is the product of $V_i$ in $R$-\textbf{mod}, there exists a unique $R$-module homomorphism $f: A \to V$, making the following diagram commute:
\begin{center}
\begin{tikzcd}
  V \arrow[r, "p_i"] 
    & V_i \\
&A  \arrow[ul, dashrightarrow, "f"] \arrow[u, "f_i"]
\end{tikzcd}
\end{center}
The map $f$ has the following properties:
\begin{enumerate}
\item $ (p_i \circ f)(\vg \cdot a)=f_i(\vg \cdot a)=\vg \cdot f_i(a)=\vg \cdot (p_i \circ f)(a),\ \text{for}\ \vg \in G\ \text{and}\ a \in A$, i.e., $f$ is $G$-linear.
\item For $U \sset V$ open, $U_i \coloneq p_i(U) \sset V_i$ is open. By continuity of $f_i$ it follows that $f_i^{-1}(U_i) \sset A$ is open, for every $i \in \I$. Thus, $f^{-1}(U)=f_i^{-1}(p_i(U)) \sset A$ is open, showing that $f$ is continuous.
\end{enumerate}

Thus, $f$ is a morphism in $\M_{ltc}(G)$, finishing the proof.
\end{proof}

To continue our investigation of adjoint functors, we would also need existence of arbitrary coproducts in $\M_{ltc}(G)$. We construct them explicitly. Let $\{V_i\}_{i \in \I}$ be an arbitrary collection of elements in $\M_{ltc}(G)$. Denote by $V \coloneq \bigoplus_{i \in \I} V_i$ their coproduct in \textbf{$R$-mod} and by $\alpha_i: V_i \to V$ the canonical injections. In this case they are just inclusion maps. 
We follow Higgins in defining the topology on $V$ \cite{Hig}:

Consider pairs $(W, \tau_W)$, such that:
 \begin{enumerate}
 \item $W \in \M_{ltc}(G)$, such that there exists a surjective $R$-module homomorphism $q_w: V \to W$, which is also $G$-linear,
 \item $\tau_W$ is a topology on $W$ in which the maps $q^i_W: V_i \to W$ that factor through $q_w$ are continuous.
\end{enumerate} 
All such pairs $(W, \tau_W)$ taken up to isomorphism form a set. Hence, we can form a product $\prod_{(W, \tau_W)}W$. The map 
 \begin{equation}\label{embedding}
 q: V \to \prod_{(W, \tau_W)}W,\ \text{given by}\ v \mapsto (q_w(v))_{(W, \tau)}
 \end{equation}
is an embedding. We endow $\prod_{(W, \tau_W)}W$ with the product topology and $V$ with the topology induced by $q$. This is a group topology \cite{Hig}. 
The map $\phi: R \x \prod_{(W, \tau_W)}W \to \prod_{(W, \tau_W)}W$ is continuous, hence, the restriction $\phi\mid_{q(V)}: R \x q(V) \to q(V)$ is also continuous. Thus, the subspace topology on $q(V)$, and respectively the induced one on $V$, is an $R$-module topology. By Lemma \ref{prod} $\prod_{(W, \tau_W)}W$ lies in $\M_{ltc}(G)$. Every subspace of a linearly topologized space is linearly topologized \cite{Lef}. Thus, as $V \cong q(V)$, both as an $R$-module and as a topological space, the topology on it is linear. A priori $V$ is not necessarily complete. However, its closure $\widebar{V}$ is, as it is a closed subspace of a complete space \cite{Bou}.

\begin{lemma} \label{coprod}
$\widebar{V}$ as defined above is the coproduct of $\{V_i\}_{i \in \I}$ in $\M_{ltc}(G)$.
\end{lemma}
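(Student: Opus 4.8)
The plan is to verify the universal property of the coproduct for $\widebar{V}$ directly, using the construction as an intersection/closure inside the big product $\prod_{(W,\tau_W)}W$. First I would fix the coproduct injections: composing the inclusions $\alpha_i\colon V_i\to V$ with the embedding $q\colon V\hookrightarrow\prod_{(W,\tau_W)}W$ and then with the inclusion $V\hookrightarrow\widebar V$ gives maps $\beta_i\colon V_i\to\widebar V$. These are $R$-linear and $G$-linear since all the maps in sight are, and they are continuous because the topology on $\widebar V$ is the subspace topology from $\prod_{(W,\tau_W)}W$ and each coordinate map $V_i\to W$ is one of the $q^i_W$, which is continuous by construction. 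So the $\beta_i$ are morphisms in $\M_{ltc}(G)$.

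Next, given a test object $(\rho,Z)\in\M_{ltc}(G)$ together with morphisms $g_i\colon V_i\to Z$ in $\M_{ltc}(G)$, I must produce a unique morphism $g\colon\widebar V\to Z$ with $g\circ\beta_i=g_i$. On the underlying $R$-modules, $V=\bigoplus_{i\in\I}V_i$ is the $R$-module coproduct, so there is a unique $R$-linear $\widetilde g\colon V\to Z$ with $\widetilde g\circ\alpha_i=g_i$; it is automatically $G$-linear because it is so on each summand and the summands generate. The key point is that $\widetilde g$ is continuous for the topology on $V$ described before Lemma~\ref{coprod}: set $W_0:=\widetilde g(V)\leq Z$ with the subspace topology from $Z$. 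Then $W_0\in\M_{ltc}(G)$ (closed submodule issues aside, every subspace of a linearly topologized space is linearly topologized, and $\widetilde g(V)$ carries a continuous $G$-action as a subrepresentation of $Z$; if completeness of $W_0$ is needed one replaces it by its closure $\overline{W_0}$ in $Z$), the induced surjection $q_{W_0}\colon V\twoheadrightarrow W_0$ is $R$- and $G$-linear, and each $V_i\to W_0$ is continuous since it equals $g_i$ followed by the continuous inclusion $W_0\hookrightarrow Z$. Hence $(W_0,\tau_{W_0})$ is one of the pairs in the indexing set, so the topology on $V$ induced by $q$ is finer than the one pulled back along $V\xrightarrow{q_{W_0}}W_0\hookrightarrow Z$; that is exactly the statement that $\widetilde g\colon V\to Z$ is continuous.

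Once $\widetilde g\colon V\to Z$ is continuous and $Z$ is complete and Hausdorff, it extends uniquely to a continuous map $g\colon\widebar V\to Z$ on the closure (standard: a uniformly continuous map into a complete Hausdorff space extends over the closure, and being a continuous homomorphism between topological groups it is uniformly continuous). This $g$ is $R$-linear and $G$-linear by continuity and density of $V$ in $\widebar V$, so it is a morphism in $\M_{ltc}(G)$, and it satisfies $g\circ\beta_i=g_i$. Uniqueness of $g$ follows from density of $V$ in $\widebar V$ together with uniqueness of $\widetilde g$ as an $R$-module map.

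The step I expect to be the main obstacle is the continuity of $\widetilde g\colon V\to Z$, i.e.\ showing that the image $\widetilde g(V)$ (or its closure in $Z$) genuinely qualifies as one of the pairs $(W,\tau_W)$ in Higgins' indexing set, so that the universal topology on $V$ is at least as fine as the one needed to make $\widetilde g$ continuous. One must be slightly careful that the pairs $(W,\tau_W)$ were required to have $W\in\M_{ltc}(G)$ with a \emph{surjective} $G$-linear $R$-module map from $V$; taking $W=\widetilde g(V)$ with the subspace (or closure) topology from $Z$ meets this, but one should check the linear-topology and completeness bookkeeping, citing that subspaces of linearly topologized spaces are linearly topologized \cite{Lef} and closed subspaces of complete spaces are complete \cite{Bou}. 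The remaining verifications — $G$-linearity of $\widetilde g$, the extension to $\widebar V$, and uniqueness — are routine density arguments.
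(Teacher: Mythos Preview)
Your overall strategy---verify the universal property directly, with the crux being continuity of the induced map out of $V$, then pass to the closure $\widebar V$---matches the paper's. You are in fact more careful than the paper on the passage to $\widebar V$: you argue explicitly that a continuous $\widetilde g\colon V\to Z$ extends uniquely to the closure by density of $V$ and completeness of $Z$, whereas the paper simply writes down a map $f\colon\widebar V\to A$ and invokes the $R$-module coproduct property of $V$ without comment on the extension.

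Where you diverge from the paper, and where there is a genuine gap, is the continuity argument for $\widetilde g$. The paper does not use the image $\widetilde g(V)$ at all; instead it uses that each $V_i$ itself appears among the indexing pairs $(W,\tau_W)$---the coordinate projection $V\twoheadrightarrow V_i$ is a $G$-linear surjection and $V_i\in\M_{ltc}(G)$ by hypothesis---and pulls opens back through those factors of the product. Your route via $W_0=\widetilde g(V)$ runs into precisely the obstacle you flag but do not resolve: the indexing pairs are required to lie in $\M_{ltc}(G)$, hence to be \emph{complete}, and $W_0$ with the subspace topology from $Z$ need not be. Your proposed remedy of replacing $W_0$ by its closure $\overline{W_0}\subseteq Z$ restores completeness but destroys the surjectivity of $V\to\overline{W_0}$, so $\overline{W_0}$ is equally excluded from the indexing set. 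Neither candidate therefore witnesses that the Higgins topology on $V$ is fine enough to make $\widetilde g$ continuous, and the argument stalls exactly where you predicted. The paper's choice of the $V_i$ themselves as the witnessing $(W,\tau_W)$ sidesteps this, since those are complete by assumption.
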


\begin{proof}
By definition $\widebar{V}$ is a linearly topologized and complete space. As $V=\bigoplus_{i \in \I} V_i$ and $V_i$ is a $G$-module for every $i \in \I$, then clearly so is $V$. Since $\prod_{(W, \tau_W)}W \in \M_{ltc}(G)$, the map $G \x \prod_{(W, \tau_W)}W \to \prod_{(W, \tau_W)}W$ is continuous. Hence, its restriction to a subspace is also continuous.
Therefore, $V$ is a continuous $G$-module and hence, so is its closure $\widebar{V}$. Thus, $\widebar{V} \in \M_{ltc}(G)$ as required.
 
Let us check that $\bar{V}$ is indeed the coproduct of $\{V_i\}_{i \in \I}$. Let $A$ be any module in $\M_{ltc}(G)$ and $\beta_i:V_i \to A$ be morphisms in $\M_{ltc}(G)$ indexed by $\I$. Since $V$ is the coproduct of $\{ V_i \}_{i \in \I}$ in \textbf{$R$-mod}, 
there exists a unique $R$-linear homomorphism $f: \widebar{V} \to A$, such that for every $i \in \I$ the diagram below commutes:
\begin{center}
\begin{tikzcd}
  V_i \arrow[r, "\alpha_i"] \arrow[dr, "\beta_i"]
    & \widebar{V} \arrow[d, dashrightarrow, "f"]\\
&A \end{tikzcd}
\end{center}
The map $f$ is $G$-linear:
$$ f \alpha_i (\vg \cdot v_i)=f(\vg \cdot (\alpha_i(v_i)))=\beta_i( \vg \cdot v_i)=\vg \cdot \beta_i(v_i)=\vg \cdot f(\alpha_i(v_i)),\ \text{for}\ v_i \in V_i, \vg \in G.$$
Lastly, let $U\sset A$ be open. By continuity of $\beta_i$, $\beta_i^{-1}(U) \sset V_i$ is open, for every $i \in \I$. Since the $V_i$'s appear amongst the $(W, \tau_W)$, then
$\beta_i^{-1}(U) \x  \prod_{(W, \tau_W), \\ W\neq V_i}W$ is open in $\prod_{(W, \tau_W)} W$ and
$$q^{-1}(\beta_i^{-1}(U) \x \prod\limits_{\substack{(W, \tau_W), \\ W\neq V_i}}W)=q^{-1}(q^i(\beta_i^{-1}(U)))=\alpha_i(\beta_i^{-1}(U))=f^{-1}(U), $$
where $q^i$ is given componentwise by the $q^i_W$ defined above. Hence, $f$ is continuous and $\widebar{V}$ is indeed the coproduct of $\{ V_i\}_{i \in \I}$.
\end{proof}

With notation as before, we have the following diagram:

\begin{center}
\begin{tikzcd}
V_i \arrow[r, "\alpha_i"] 
\arrow[rr, bend right, "q^i"]  
& V \arrow[r, "q"] 
  & \prod\limits_{(W, \tau_W)}W.
\end{tikzcd}
\end{center}
Since $q^i_W$ is continuous for each $i \in \I$, then so is $q^i$ \cite{Bou}. By definition of the topology on $V$, $q$ is continuous. Hence, $\alpha_i$ is continuous for each $i$. This means that the topology on $V$ is contained in the final topology with respect to $\alpha_i$. However, the continuity of the $\alpha_i$ implies that $V$ appears as one of the $W$, thus, the coproduct topology defined above coincides with the final topology. 

Now we would like to give an explicit description of the basis of open neighbourhoods of $0$ in $V$. Chasco and Dom\'{i}nguez describe this basis with respect to the final topology for a coproduct of topological abelian groups \cite{CD}. 
We generalise their construction to topological $R$-modules: 

Let $\{U_i\}_{i \in \I}$ be a sequence of neighbourhoods of $0$, with $U_i$ a neighbourhood of $0$ in $V_i$. Let $\sJ \sset \I$ be finite. Then 
\begin{equation}\label{opens}
U \coloneq \bigcup_{\substack{n \in \N, \\ |\sJ|=n}} \bigcup_{\substack{\sK \sset \I, \\  |\sK|=|\sJ|}} \sum_{i \in \sK} \alpha_i(U_i)
\end{equation}
is a sequence of neighbourhoods of $0$ in $V$. Hence, the basis is given by
$$\sU = \{U\ |\ \{U_i\}_{i \in \I}, \ \text{with } U_i \sset V_i\ \text{open neighbourhood of } 0 \}.$$
This indeed agrees with our description of the topology:
If $\sB \in \prod_{(W, \tau_W)}W$ is an open neighbourhood of $0$ in $\prod_{(W, \t_W)}W$, $q^{-1}(\sB)$ would be the corresponding open in $V$ and



$$q^{-1}(\sB) \bigcap V= q^{-1}(\sB) \bigcap \bigoplus_{i \in \I} V_i= \bigoplus_{i \in \I} q^{-1}(\sB) \bigcap V_i,$$
can be written in the form of \ref{opens}.

\begin{thm} \label{adjoints-ltc}

Let $G$ be a topological group and $H \leq G$ a closed subgroup. Let 
$$\res^G_H: \M_{ltc}(G) \to \M_{ltc}(H)$$
be the restriction functor. The following hold:
\begin{enumerate}
\item $\res_H^G$ has a left adjoint $\coind_H^G: \M_{ltc}(H) \to \M_{ltc}(G)$.
\item If $H$ is also open, then $\res_H^G$ has a right adjoint 
$$\ind_H^G: \M_{ltc}(H) \to \M_{ltc}(G),$$
given by $\ind_H^G: W \mapsto \fun_H(G, W)$, where $\fun_H(G,W)$ is the space of all functions $f:G \to W$, such that $f(\vg \vh)=\vh \cdot f(\vg)$.
\end{enumerate}

\end{thm}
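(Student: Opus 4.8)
The plan is to obtain (1) from the Freyd Adjoint Functor Theorem and to prove (2) by writing down the candidate functor $\ind_H^G$ explicitly and verifying the adjunction directly. For (1), note first that $\M_{ltc}(G)$ is locally small and complete: it has arbitrary products by Lemma~\ref{prod}, and, being abelian, it has kernels and hence equalizers and hence all small limits. The restriction functor preserves these: it preserves products since, by Lemma~\ref{prod} applied to both $G$ and $H$, a product in either category is the underlying $R$-module product with the product topology and $\res_H^G$ merely forgets part of the action; and it preserves equalizers because an equalizer in an abelian category is a kernel, kernels in $\M_{ltc}(-)$ are the underlying $R$-module kernels with the subspace topology, and $\res_H^G$ changes neither the morphisms nor the underlying topological modules. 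So $\res_H^G$ is continuous, and it remains to check the Solution Set Condition.

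Fix $W\in\M_{ltc}(H)$ and a morphism $h\colon W\to\res_H^G V$. Let $M\sset V$ be the $R$-submodule generated by the $G$-orbit of $h(W)$; it is a $G$-submodule, and with the subspace topology it is linearly topologized with continuous $G$-action (everything inherited from $V$). Let $\widehat M$ be its completion; then $\widehat M\in\M_{ltc}(G)$, the $R$-structure and $G$-action extending continuously by density and completeness of $\widehat M$. The corestriction $W\to M$ of $h$ is continuous, and composing with $M\to\widehat M$ gives a morphism $f\colon W\to\res_H^G\widehat M$; since $V$ is complete, $M\incl V$ extends uniquely to a continuous $G$-map $t\colon\widehat M\to V$, and by construction $h=\res_H^G(t)\circ f$. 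Finally, $|M|\le\lambda\coloneq\max(\aleph_0,|R|\cdot|G|\cdot|W|)$; since $M$ then has at most $2^{\lambda}$ open submodules, its completion embeds into a product of at most $2^{\lambda}$ quotients of $M$, each of cardinality $\le\lambda$, so $|\widehat M|$ is bounded by a cardinal $\kappa$ depending only on $|R|$, $|G|$ and $|W|$. As $G$ is fixed, the isomorphism classes of objects of $\M_{ltc}(G)$ of cardinality $\le\kappa$, together with the hom-sets out of $W$, form a solution set. By Freyd's theorem, $\res_H^G$ has a left adjoint $\coind_H^G$.

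For (2), assume in addition that $H$ is open. Put $\ind_H^G W\coloneq\fun_H(G,W)$, the $R$-module of $H$-equivariant functions $G\to W$ with the translation $G$-action (the conventions being those of Section~\ref{smooth}). Since $H$ is open, $G/H$ is discrete, so fixing a transversal identifies $\fun_H(G,W)$ with $\prod_{G/H}W$ as an $R$-module, and we give it the product topology; by Lemma~\ref{prod} it is then linearly topologized and complete, and the $G$-action is continuous --- this is the one step that genuinely uses openness of $H$, through the fact that $G\to G/H$ is locally constant --- so $\ind_H^G W\in\M_{ltc}(G)$, with functoriality immediate. The adjunction $\Hom_G(V,\ind_H^G W)\cong\Hom_H(\res_H^G V,W)$ is then set up exactly as for the right adjoint $\ind_H^G$ in $\M_d(G)$: one sends $\psi$ to $v\mapsto\psi(v)(1_G)$, with inverse sending $\phi$ to $v\mapsto(\vg\mapsto\phi(\vg\cdot v))$, and checks that these land in the stated spaces, are $R$-linear, continuous, $G$- resp.\ $H$-equivariant, mutually inverse and natural in $V$ and $W$; this is routine.

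The main obstacle is the Solution Set Condition in (1): one must be confident that passing from $h(W)$ to the completed $G$-submodule it generates does not leave the realm of (small) sets, i.e.\ that its cardinality is bounded purely in terms of the fixed ring $R$, the fixed group $G$ and the object $W$. The remaining ingredients --- continuity of $\res_H^G$, the verification that $\fun_H(G,W)$ actually lies in $\M_{ltc}(G)$, and the explicit adjunction of (2) --- are all straightforward, the single genuine input being that $\fun_H(G,W)\in\M_{ltc}(G)$ uses $H$ open.
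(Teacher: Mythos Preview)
Your argument is correct, and for part (1) it runs parallel to the paper's --- both apply Freyd's theorem after checking that $\res_H^G$ preserves products and equalizers --- though you are more careful than the paper about the Solution Set Condition, which the paper passes over in silence.

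For part (2) the two proofs diverge genuinely. The paper stays with Freyd's theorem and proves the \emph{dual} hypothesis: it shows that $\res_H^G$ is cocontinuous by comparing the Higgins-type coproduct topologies in $\M_{ltc}(G)$ and $\M_{ltc}(H)$, using $\fun_H(G,W)$ only as an auxiliary device to match up the families $(W,\tau_W)$ that define those topologies. You instead take $\fun_H(G,W)$ as the candidate right adjoint from the outset, identify it with $\prod_{G/H}W$ via a transversal (here is where $H$ open is used, exactly as you say), and verify the adjunction by the same unit/counit formulas as in the discrete case. Your route is shorter and more explicit --- it delivers the concrete description of $\ind_H^G$ promised in the statement directly, and it sidesteps both the coproduct construction of Lemma~\ref{coprod} and the dual Solution Set Condition. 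The paper's route, on the other hand, keeps the treatment of left and right adjoints symmetric and isolates cocontinuity of $\res_H^G$ as a fact of independent interest (it is reused in Section~\ref{three}).
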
 

\begin{proof}
We wish to apply Freyd's Adjoint Functor Theorem to prove the statements. First, $\M_{ltc}(G)$ is an abelian category. Hence, equializers and coequalizers exist. By the product-equalizer (coproduct-coequalizer) construction of limits (respectively colimits), Lemma \ref{prod} and Lemma \ref{coprod} imply that $\M_{ltc}(G)$ is both complete and cocomplete. Since the restriction functor does not change morphisms, it commutes with equalizers and coequalizers. Thus, to show existence of left and right adjoint to $\res^G_H$ we only need to check whether it commutes with products and coproducts.
Let us start with products, i.e., we want
$$\res^G_H(\prod_i V_i) \cong \prod_i \res^G_H(V_i).$$
By Lemma \ref{prod} $\prod_i \res^G_H(V_i) \in \M_{ltc}(H)$ and 
$$\prod_i \res^G_H(V_i) \cong \prod_i {}_{H}V_i \cong {}_{H}(\prod_i V_i)\cong \res^G_H(\prod_i V_i).$$
Note that ${}_{H}(\prod_i V_i)$ is already a continuous $H$-module. Thus, the second isomorphism holds because $\prod_i {}_{H}V_i$ and ${}_{H}(\prod_i V_i)$ have the same structure as $H$-modules, as well as topological spaces. This completes the proof of (1).

Now we move on to coproducts. 
To have a right adjoint to $\res^G_H$ we need to check cocontinuity, i.e.:
$$\bigoplus_i \res^G_H(V_i) \cong \res^G_H(\bigoplus_i V_i),$$
where by $\oplus$ we denote the coproduct in $\M_{ltc}(H)$ and $\M_{ltc}(G)$ respectively.
This amounts to showing that 
$$\bigoplus_i {}_{H}V_i \cong {}_{H} \big(\bigoplus_i V_i \big).$$
Since $\bigoplus_i {}_{H}V_i$ and ${}_{H}\bigoplus_i V_i$ have the same structure as $H$-modules and the action of $H$ on both is continuous, to obtain the isomorphism, we need to show that the coproduct topologies on both sides are the same.
In particular, for every $H$-module $(W, \t_W)$ for which there exists a surjective $H$-map $\varphi: \bigoplus_i {}_{H}V_i \to W$ with the property that the maps $q_i: V_i \to W$ are continuous, we have to find a module $(\widetilde{W}, \t_{\widetilde{W}}) \in \M_{ltc}(G)$, for which there exists a surjective $G$-map $\widetilde{\varphi}: {}_{H}\bigoplus_i V_i \to \widetilde{W}$, such that the maps $\widetilde{q_i}: V_i \to \widetilde{W}$ factoring through $\widetilde{\varphi}$ are continuous.
 In addition, for every open $U \sset W$ there must be an open $\widetilde{U} \sset \widetilde{W}$, such that $\widetilde{\varphi}^{-1}(\widetilde{U}) \sset \varphi(U)$.

Fix $(W, \t_{W})$ with the above properties. Let $\fun_H(G, W)$ be the space of all right $H$-equivariant functions $f: G \to W$, i.e., which satisfy the property:
$$ f(\vg \vh)=\vh \cdot f(\vg),\ \text{for}\ \vg \in G, \vh \in H.$$
This is a $G$-module via $\vg \cdot f: \vx \mapsto f(\vx \vg)$. We claim that $\widetilde{W} \coloneq \fun_H(G, W)$ satisfies the desired conditions. 
First, we show that $\widetilde{W} \in \M_{ltc}(G)$.
Let $X$ be a set of right coset representatives of $H$ in $G$. There is an $H$-module isomorphism:
$$\psi: \fun_H(G,W) \cong \prod_{a \in X} a \otimes_H W,\quad \psi: f \mapsto (a_i \otimes f(1))_{a_i \in X}.$$
Identifying the right hand-side as  $|X|$ copies of $W$, we can put the product topology on it. By Lemma \ref{prod}  $\prod_{a \in X} a \otimes_H W \in \M_{ltc}(H)$. 
Endow $\fun_H(G,W)$ with the topology induced by $\psi$. 
Since $\fun_H(G,W) \cong \prod_{a \in X} a \otimes_H W$ as $H$-modules and they have the same topology, then $\fun_H(G,W) \in \M_{ltc}(H)$. \newline
Let $U \sset \fun_H(G,W)$ be open. Suppose $\vg \cdot f \in U$, for $\vg \in G$ and $f \in \fun_H(G,W)$. 
The function $\vg \cdot f$ is $H$-equivariant and we can rewrite it as $\vg \cdot f: \vx \mapsto f(\vx \vg)=f(\vy \vh)=\vh \cdot f(\vy) \coloneq \vh \cdot w_{\vy} \in U$, for $\vg, \vx, \vy \in G$, $\vh \in H$ and $w_{\vy} \in W$. Since $W$ is a continuous representation of $H$, there exist an open neighbourhood $K$ of $1_H$ and an open submodule $Z \leq W$, such that $K \vh( w_{\vy}+Z)\in U$.
But as $H \leq G$ is open, $K$ is an open neighbourhood of $1_G$. Set $\widetilde{Z} \coloneq \psi^{-1}(1_G \otimes Z \x \prod_{a \in X, a \neq 1_G} a \otimes W)$. This is an open submodule of $\fun_H(G,W)$. Moreover, the pair $(K, \widetilde{Z})$ has the property $K \vg( f + \widetilde{Z}) \sset U$. In particular, $\fun_H(G,W)$ is a continuous representation of $G$.

Next, extend every surjective $H$-map $\varphi: \bigoplus_i {}_{H}V_i \to W$ to a surjective $G$-map $\widetilde{\varphi}: {}_{H}\bigoplus_i V_i \to \widetilde{W}$ by
$$\widetilde{\varphi}: v \mapsto f_v: \vg \to \varphi(\vg \cdot v).$$ 
Fix an open $U \sset W$. Then $b \otimes U \x \prod_{a \in X, a \neq b} a \otimes W \sset \prod_{a \in X} a \otimes W$ is open. Let $\widetilde{U}= \psi^{-1}(b \otimes U \x \prod_{a \in X, a \neq b} a \otimes W).$ By definition $\widetilde{U}$ is open in $\widetilde{W}$ and \newline
$\widetilde{U}=\{f \in \fun_H(G,W)\ |\ f(1) \in U \}.$ Then

$$\widetilde{\varphi}^{-1}(\widetilde{U}) = \{ v \in V\ |\ f_v \in \widetilde{U} \} 
= \{v \in V\ |\ f_v(1) \in U \} \sset \{v \in V\ |\ \varphi(v) \in U \} = \varphi^{-1}(U).$$

\end{proof}



\section{Category of Compact Representations} \label{three}

Let us start by defining the category $\M_c(G)$. As always the objects are pairs $(\pi, V)$, where $(V, \sT)$ is a topological $R$-module and $\pi: G \to \aut_R(V)$ is a continuous representation of a topological group $G$. We need to describe the topology $\sT$ on $V$. 

Firstly, we look at the case when $R= \F$ is a field, not necessarily commutative. 
As explained in the Intoduction, there are a few equivalent definitions of (linear) compactness for topological vector spaces. Following Beilinson-Drinfeld we view them as topological duals $V^{\star}$ of discrete vector spaces $V$ \cite{Dri}, \cite{Hitch}. By a topological dual we mean the space of all continuous linear functionals on $V$. The topology on $V^{\star}$ is given by orthogonal complements of finite dimensional subspaces of $V$ with respect to the canonical pairing \cite{Hitch}.

Define
$$\sD: \M_d(G) \to \M_c(G)$$
by
$$ V \mapsto V^{\star}\ \text{and}\ f \mapsto f^\star,\ \text{where}\ f^\star: \varphi \mapsto \varphi \circ f.$$
We claim that this is a contravariant functor, which induces an anti-equivalence of categories. 
First, let us check that $\sD$ is indeed a functor.

\begin{lemma} \label{dual}
Suppose $R=\F$ is a field and let $V \in \M_d(G)$. Then $V^\star \in \M_c(G)$, where $V^\star \coloneq \{ f: V \to \F\}$ is the space of all continuous linear functionals on $V$.
\end{lemma}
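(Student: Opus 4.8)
The plan is to check, in order, the three structural axioms that make a topological $\F$-module an object of $\M_c(G)$ — that the topology on $V^\star$ is linear, that all quotients by open submodules have finite length, and that $V^\star$ is complete — and then to produce a continuous $G$-action on it, since ``$V^\star \in \M_c(G)$'' bundles all four assertions. I expect the completeness and, above all, the continuity of the $G$-action to be the substantive points; everything else is bookkeeping about orthogonal complements.

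\emph{The topology.} Since $V$ carries the discrete topology, every linear functional $V \to \F$ is continuous, so $V^\star = \Hom_\F(V,\F)$ is the full algebraic dual. For a finite-dimensional subspace $F \leq V$ set $F^\perp \coloneq \{\varphi \in V^\star : \varphi|_F = 0\}$. I would first observe that the restriction map $V^\star \to F^\star \coloneq \Hom_\F(F,\F)$ is surjective with kernel $F^\perp$, so $V^\star/F^\perp \cong F^\star$ has $\F$-dimension $\dim F < \infty$; thus each $F^\perp$ is an open $\F$-submodule of finite colength, and for $\F$-modules finite colength is finite length, giving the ``compact'' axiom. Because $(F_1+F_2)^\perp = F_1^\perp \cap F_2^\perp$ with $F_1 + F_2$ still finite-dimensional, the family $\{F^\perp\}_F$ is a filter basis of $\F$-submodules, hence a fundamental system of neighbourhoods of $0$ for a linear group topology on $V^\star$ — precisely the Beilinson--Drinfeld topology. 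It is Hausdorff since $\bigcap_F F^\perp = 0$ (if $\varphi(v)\neq 0$ then $\varphi \notin (\F v)^\perp$).

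\emph{Completeness.} Writing $V$ as the filtered union of its finite-dimensional subspaces and applying $\Hom_\F(-,\F)$, I would identify $V^\star$ with the cofiltered limit $\varprojlim_F F^\star$, where the topology cut out by $\{F^\perp\}$ coincides with the limit (subspace-of-product) topology because $F^\perp = \ker(V^\star \to F^\star)$. Each $F^\star$ is finite-dimensional and discrete, hence complete; by Lemma \ref{prod} (or directly by \cite{Bou}) the product $\prod_F F^\star$ is complete; and $V^\star$ sits in it as the closed subspace carved out by the restriction-compatibility relations, which are closed conditions since the $F^\star$ are Hausdorff. A closed subspace of a complete space is complete, so $V^\star$ is complete. (Equivalently, one can argue directly that a Cauchy net in $V^\star$ is eventually constant modulo each $F^\perp$, i.e.\ eventually constant on each finite-dimensional $F$, and patch these eventual values into the limiting functional.)

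\emph{The $G$-action.} Define $\rho \colon G \to \aut_\F(V^\star)$ by $\rho(\vg)\varphi \coloneq \varphi \circ \pi(\vg^{-1})$; the inverse makes this a homomorphism rather than an anti-homomorphism, and it is visibly $\F$-linear, so $(\rho, V^\star)$ is a representation of $G$. For continuity of $\phi \colon G \x V^\star \to V^\star$, work at a point $(\vg_0, \varphi_0)$ and a basic neighbourhood $\rho(\vg_0)\varphi_0 + F^\perp$ of its image, $F$ finite-dimensional. Put $F' \coloneq \pi(\vg_0^{-1})F$, choose a basis $w_1,\dots,w_k$ of $F'$, and use $V \in \M_d(G)$ to obtain, for each $j$, an open subgroup of $G$ fixing $w_j$; their finite intersection $K'$ is an open subgroup of $G$ fixing $F'$ pointwise. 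Then for $\vg \in \vg_0 K'$ one has $\pi(\vg^{-1})v = \pi(\vg_0^{-1})v$ for every $v \in F$, and for $\varphi \in \varphi_0 + (F')^\perp$ one has $\varphi(\pi(\vg_0^{-1})v) = \varphi_0(\pi(\vg_0^{-1})v)$; combining, $\rho(\vg)\varphi$ and $\rho(\vg_0)\varphi_0$ agree on $F$, i.e.\ $\rho(\vg)\varphi \in \rho(\vg_0)\varphi_0 + F^\perp$. This supplies the required open neighbourhoods $\vg_0 K'$ of $\vg_0$ and $\varphi_0 + (F')^\perp$ of $\varphi_0$, so $\phi$ is continuous and $(\rho, V^\star) \in \M_c(G)$. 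The one genuinely load-bearing step is this last one: the discreteness (smoothness) of $V$ is exactly what guarantees the open subgroup $K'$ fixing the finitely many vectors $w_1,\dots,w_k$, and without it the argument collapses.
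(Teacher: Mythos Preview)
Your proof is correct and in fact more complete than the paper's. The paper only verifies the continuity of the $G$-action, treating the remaining properties of $V^\star$ (linearity of the topology, finite colength of $V^\star/F^\perp$, completeness) as part of the Beilinson--Drinfeld setup recalled just before the lemma; you supply those checks explicitly via the inverse-limit description $V^\star \cong \varprojlim_F F^\star$. For the continuity of the $G$-action the two arguments are the same idea: translate the finite-dimensional space defining the basic open by the group element and use smoothness of $V$ to find an open subgroup fixing a basis. Your formulation is the cleaner one --- choosing $F' = \pi(\vg_0^{-1})F$ and the neighbourhood $(F')^\perp$ is exactly what is needed, whereas the paper fixes the basis of $M$ itself and takes $N = \vg M$, which does not literally yield $\vg K \cdot (f + N^\star) \subset M^\star$ without swapping $\vg M$ for $\vg^{-1}M$ (or $\vg K$ for $K\vg$).
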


\begin{proof}
Let $\phi: G \x V^\star \to V^\star$, given by $(\vg, f) \mapsto \vg \cdot f: x \mapsto f(\vg^{-1}x)$ be the map defining the $G$-action on $V^\star$. This makes $V^\star$ into a $G$-module. We only need to show that $\phi$ is continuous.

Let $M^\star \sset V^\star$ be open. Suppose that $\vg \cdot f \in M^\star$, for some $\vg \in G$ and $f \in V^\star$. By definition $M^\star \coloneq \{ f:V \to \F\ |\ f(m)=0\ \text{for all}\ m \in M\}$, where $M \sset V$ is finite dimensional. Then $M=\F \<m_1,..,m_n \>$, for some $m_i \in V, i=1,...,n$. Since $V \in \M_d(G)$, there exist open neighbourhoods $K_i$ of $1_G$, such that $\vk_i \cdot m_i=m_i$, for every $\vk_i \in K_i$ and $i=1,..,n$. Since $G$ is endowed with a group topology, we can choose $K_i$ to be symmetric. Let $K=\bigcap_{i=1}^n K_i$. This is an open symmetric neighbourhood of $1_G$. Now consider $N \coloneq \vg \cdot M= \F \<\vg \cdot m_1,...,\vg \cdot m_n \>$. This is a finite dimensional subspace of $V$. Hence, $N^\star \coloneq \{ f:V \to \F\ |\ f(n)=0\ \text{for all}\ n \in N\} \sset V^\star$ is open. 
Thus, $K \sset G$ and $N^\star \sset V^\star$ are both open and $\vg K \cdot (f + N^\star) \sset M^\star$, finishing the proof.
\end{proof}

Lemma \ref{dual} shows that $\sD$ maps objects to objects. Let us check it does the same on morphisms. Let $V_1, V_2 \in \M_d(G)$ and $f: V_1 \to V_2$ be a morphism. Then $f^\star: V_2^\star \to V_1^\star$ has the following properties:
\begin{enumerate}
\item $f^\star( \vg \cdot \varphi)= (\vg \cdot \varphi) \circ f: v_1 \mapsto \varphi( \vg^{-1} f(v_1))=\varphi(f(\vg^{-1}v_1))=\varphi \circ (\vg \cdot f)= \vg \cdot f^\star(\varphi)$, where $\vg \in G, v_1 \in V_1, \varphi \in V_2^\star$. Thus $f^\star$ is $G$-linear.
\item Let $U^\star \sset V_1^\star$ be open. Then $f^\star(U^\star)^{-1}=\{\varphi \in V_2^\star\ |\ (\varphi \circ f)(U)=0\}$, for $U \sset V_1$ of finite dimension. But since $f$ is a linear map, then $f(U) \sset V_2$ is also a finite dimensional subspace. By definition of the topology on $V_2^\star$ it follows that $f^\star(U^\star)^{-1}$ is open and $f^\star$ is continuous. 
\end{enumerate}

Therefore, $\sD$ is indeed a functor. It is clear from the definition of a compact vector space that $\sD$ is bijective. Thus:

\begin{lemma} \label{prod->coprod}
The functor $\sD$ induces an anti-equivalence between $\M_d(G)$ and $\M_c(G)$. In particular, $\sD$ maps products in $\M_d(G)$ to coproducts in $\M_c(G)$.
\end{lemma}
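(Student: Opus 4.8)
The plan is to produce an explicit quasi-inverse to $\sD$ out of the same duality recipe, check that the biduality map is a natural isomorphism of continuous $G$-modules, and then read off the statement about products from the general principle that a contravariant equivalence interchanges limits and colimits.

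First I would define a functor $\sD' \colon \M_c(G) \to \M_d(G)$ sending $W$ to its continuous dual $W^\star = \{ f \colon W \to \F \ \text{continuous linear}\}$, equipped with the discrete topology and the contragredient action $(\vg \cdot f)(x) = f(\vg^{-1}x)$, and sending $g$ to $g^\star$, $g^\star(\varphi) = \varphi \circ g$. One must check that $W^\star$ really lands in $\M_d(G)$, i.e.\ that each $f \in W^\star$ has open stabiliser in $G$; this is the statement dual to Lemma \ref{dual} and rests on the classical duality between discrete and linearly compact vector spaces recorded in \cite{Lef}, \cite{Dieu}, \cite{Dri}: a continuous $f$ has open kernel of finite codimension, so it lies in the finite-dimensional subspace $(W/U)^\star \sset W^\star$ for some open submodule $U \leq W$, on which $G$ acts through an open subgroup by continuity of the action on $W$. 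Functoriality of $\sD'$ is verified exactly as for $\sD$.

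Next I would establish biduality: for $V \in \M_d(G)$ the evaluation map $\eta_V \colon V \to V^{\star\star}$, $v \mapsto (f \mapsto f(v))$, is an isomorphism in $\M_d(G)$, natural in $V$. Indeed it is plainly $\F$-linear, $G$-equivariant and natural; injective, because linear functionals separate points of a discrete space (extend from a basis); surjective, because a continuous functional on the compact space $V^\star$ must vanish on the annihilator of some finite-dimensional $U \leq V$, hence descends to the finite-dimensional dual of $U$ and so is evaluation at some $u \in U$; and a homeomorphism, since both sides are discrete. The mirror-image map $\epsilon_W \colon W \to W^{\star\star}$ is likewise a natural isomorphism for $W \in \M_c(G)$. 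Hence $\sD' \circ \sD \cong \id_{\M_d(G)}$ and $\sD \circ \sD' \cong \id_{\M_c(G)}$, so $\sD$ is an anti-equivalence.

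For the final assertion, a contravariant equivalence carries limit cones to colimit cocones. Concretely, let $\{V_i\}_{i \in \I}$ lie in $\M_d(G)$ with product $P$ and projections $p_i \colon P \to V_i$; then $(p_i^\star \colon V_i^\star \to P^\star)_{i \in \I}$ is a cocone in $\M_c(G)$, and given any $W \in \M_c(G)$ with morphisms $q_i \colon V_i^\star \to W$ one applies $\sD'$, uses $\eta$ and $\epsilon$ to identify $W^{\star\star}$ with $W$ and $V_i^{\star\star}$ with $V_i$, invokes the universal property of $P$ to get a unique $u \colon W^\star \to P$ with $p_i \circ u$ matching $q_i^\star$, and takes $u^\star$; faithfulness of $\sD$ gives uniqueness. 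Thus $\sD(P) = P^\star$ is the coproduct of $\{V_i^\star\}_{i \in \I}$ in $\M_c(G)$, i.e.\ $\sD$ sends products to coproducts. The main obstacle is the well-definedness of $\sD'$ together with the surjectivity of $\eta_V$ — equivalently, that the contragredient action on the continuous dual of a compact vector space is smooth and that $V \cong V^{\star\star}$ exactly, topology and $G$-action included; this is precisely the reflexivity theory for linearly compact vector spaces invoked in the introduction, and once it is in hand the remainder is formal diagram-chasing.
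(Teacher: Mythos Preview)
Your proposal is correct and considerably more detailed than the paper's own treatment. The paper, having adopted the Beilinson--Drinfeld viewpoint that a compact vector space \emph{is by definition} the topological dual of a discrete one, disposes of the lemma in a single sentence just before stating it: ``it is clear from the definition of a compact vector space that $\sD$ is bijective.'' You instead work from the intrinsic characterisation of compactness (linear, complete, open subspaces of finite codimension), build the quasi-inverse $\sD'$ explicitly, verify the dual of Lemma~\ref{dual}, and check biduality by hand before invoking the general fact that anti-equivalences swap limits and colimits. What you gain is an argument independent of the definitional shortcut; what it costs is the reflexivity input from \cite{Lef}, \cite{Dieu}, \cite{Dri}, which the paper simply absorbs into its definition.

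One small point of care: in your sketch that $W^\star \in \M_d(G)$, the phrase ``on which $G$ acts through an open subgroup'' is not quite right as stated, since $G$ need not preserve an arbitrary open $U \leq W$ and hence need not act on $W/U$. The fix is routine: choose representatives $w_1,\dots,w_n$ of a basis of $W/\ker f$, use continuity of the $G$-action at each $(1_G,w_i)$ to find open $K_i \ni 1_G$ with $K_i w_i \subseteq w_i + \ker f$, and use the paper's continuity condition at $U=\ker f$ to find open $K_0 \ni 1_G$ with $K_0 \cdot \ker f \subseteq \ker f$; then $K = \bigcap K_i$ fixes $f$. With that adjustment the argument goes through.
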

In the following example we use our observations about $\sD$ to gain information about the cocontinuity of $\res_H^G$ in $\M_c(G)$.
\begin{ex} \label{ex-cpct}
 Suppose $\{ V_i \}_{i \in \I}$ is an arbitrary collection of discrete vector spaces over a field $\F$, such that $G$ acts continuously on each $V_i$. 
By Lemma \ref{dual} $V_i^\star \in \M_c(G)$. By Freyd's Theorem to have a right adjoint to $\res_H^G$ in $\M_c(G)$ we need 
\begin{equation} \label{isom-c}
\res_H^G \big(\bigoplus_{i\in \I} V_i^\star \big) \cong \bigoplus_{i\in \I} \res_H^G(V_i^\star),
\end{equation}

where $\bigoplus$ denotes the coproduct. By Lemma \ref{prod->coprod}
\begin{equation} \label{1}
\res_H^G\big(\bigoplus_{i\in \I} V_i^\star \big) \cong \res_H^G\big(\big(\big(\prod_{i \in \I} V_i \big)^{sm} \big)^\star \big).
\end{equation}
However, 

\begin{equation} \label{2}
\bigoplus_{i\in \I} \res_H^G(V_i^\star) \cong \big((\prod_{i \in \I} {}_{H}V_i)^{sm}\big)^\star,
\end{equation}
By Theorem \ref{l-adj d} 
$\res_H^G\big(\big(\prod_{i \in \I} V_i \big)^{sm} \big) \cong \prod_{i \in \I} \big(({}_{H}V_i)^{sm}\big)$ if $H$ is open. Thus, for $H\leq G$ open \ref{isom-c} holds and there is a well-defined functor 
$$\ind_H^G: \M_c(H) \to \M_c(G).$$
\end{ex}

As explained in the Introduction, all linearly compact vector spaces are complete. This means that $\M_c(G)$ is a subcategory of $\M_{ltc}(G)$. In the next example we consider the coproduct $\bigoplus_{i\in \I} V_i^\star $, for $V_i^\star \in \M_c(G)$, as an object of $\M_{ltc}(G)$. We aim to illustrate that in the case of topological vector spaces, if $H \leq G$ is not open, $\ind_H^G$ is not always defined in $\M_{ltc}(G)$, too.
\begin{ex}
For a collection $\{V_i^\star \}_{i \in \I} \in \M_c(G)$ defined as in Example \ref{ex-cpct}, consider their coproduct $\bigoplus_{i \in \I} V_i^\star$ in $\M_{ltc}(G)$. Keeping the notation and conventions of Section \ref{two}, recall that this is the coproduct in \textbf{Vect}$_\F$ with topology induced by the embedding
$$ q: \bigoplus_{i \in \I} V_i^\star \to \prod_{(W, \t_W)} W.$$
Since we are considering the coproduct as an object of $\M_{ltc}(G)$, rather than $\M_c(G)$, some of the $(W, \t_W)$ can be linearly topologized and complete, but not compact. Suppose that this is the case. The continuity of the maps $q_W^i:V_i^\star \to W$ implies that for every open submodule $U \sset W$ and for all $i \in \I,  q^i_W(U)^{-1}$ is open and hence of finite codimension in $V_i^\star$.
Since $q_W^i=q_W \circ \alpha_i$ and $q=(q_W)_{(W, \t_W)}$, it follows that $q^{-1}(U)$ is of finite codimension in $\bigoplus_{i \in \I} V_i^{\star}$.
As all open submodules of $\bigoplus_{i \in \I} V_i^{\star}$ correspond to inverse images of open submodules $U \leq \prod_{(W, \t_W)}W$, the topology on $\bigoplus_{i \in \I} V_i^\star$ is compact.
By the uniqueness of coproducts and Example \ref{ex-cpct} it follows that if $H$ is not open, the restriction functor $\res_H^G: \M_{ltc}(G) \to \M_{ltc}(H)$ is not cocontinuous.
\end{ex}


We move on to $R$-modules, where $R$ is an associative ring with 1. We call an $R$-module $V$ \emph{compact} if $V$ admits a linear complete topology with the additional property that if $U \sset V$ is an open submodule, then $V/U$ is of finite length. Such modules are sometimes called \emph{pseudocompact} \cite{Iovanov}, \cite{frauke}. We call a topological $R$-module $V$ \emph{linearly compact} if it is linearly topologized, Hausdorff, and such that every family of closed cosets in $V$ has the finite intersection property \cite{War}. Every compact module is linearly compact \cite{Iovanov}. We denote by $\M_c(G)$ the category of all compact $R$-modules which admit a continuous action of the topological group $G$. We now construct products and coproducts in $\M_c(G)$.

\begin{lemma} \label{prod-c}
Arbitrary products exist in $\M_c(G)$.
\end{lemma}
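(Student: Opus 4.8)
The plan is to mimic the proof of Lemma~\ref{prod} in the $\M_{ltc}(G)$ setting, since $\M_c(G)$ is a full subcategory of $\M_{ltc}(G)$: the only genuinely new thing to verify is that the candidate product --- the product as $R$-modules, equipped with the product topology --- actually lands back in $\M_c(G)$, i.e.\ that the finite-colength condition on open submodules is preserved. First I would take an arbitrary collection $\{(\pi_i,V_i)\}_{i\in\I}$ in $\M_c(G)$ and set $V \coloneq \prod_{i\in\I} V_i$ with the product topology. By Lemma~\ref{prod}, $V$ is a linearly topologized, complete topological $R$-module carrying a continuous componentwise $G$-action, and it satisfies the universal property of the product in $\M_{ltc}(G)$; since the projections $p_i$ and the mediating map $f$ built there are continuous $G$-maps, the same $V$ will satisfy the universal property in $\M_c(G)$ once we know $V\in\M_c(G)$.

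So the crux is: for every open submodule $U\leq V$, the quotient $V/U$ has finite length. The key observation is that, in the product topology, a base of open submodules of $V$ is given by the sets $U_{\sJ} \coloneq \prod_{i\in\sJ} U_i \times \prod_{i\in\I\setminus\sJ} V_i$ with $\sJ\subseteq\I$ finite and each $U_i\leq V_i$ an open submodule. For such a basic $U_{\sJ}$ we have $V/U_{\sJ} \cong \prod_{i\in\sJ} V_i/U_i$, a finite product of finite-length $R$-modules (each $V_i\in\M_c(G)$), hence itself of finite length. For a general open submodule $U$, choose a basic $U_{\sJ}\subseteq U$; then $V/U$ is a quotient of $V/U_{\sJ}$, and a quotient of a finite-length module has finite length. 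This gives $V/U$ of finite length for every open $U$, so $V\in\M_c(G)$.

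Finally I would assemble the pieces: $V$ with the projections $p_i$ is an object of $\M_c(G)$, and given any $A\in\M_c(G)$ with morphisms $f_i\colon A\to V_i$ in $\M_c(G)$, the unique $R$-module map $f\colon A\to V$ with $p_i\circ f=f_i$ is $G$-linear and continuous by exactly the argument in Lemma~\ref{prod} (these properties are intrinsic and do not reference the finite-colength condition). Hence $f$ is a morphism in $\M_c(G)$ and $V$ is the product.

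I expect the only subtle point to be the reduction to basic open submodules: one must be careful that ``open submodule'' in the product topology really is sandwiched between a basic $U_{\sJ}$ of the stated form and, and that $U_{\sJ}$ can be taken to itself be a submodule (not merely a neighbourhood of $0$) --- this is where linearity of the topology on each $V_i$ is used, exactly as in the linearity argument of Lemma~\ref{prod}. Everything else is routine and inherited from $\M_{ltc}(G)$.
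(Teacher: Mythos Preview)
Your proof is correct. It differs from the paper's in how the ``compactness'' (pseudocompactness) of the product is obtained. The paper invokes an external fact: products of linearly compact $R$-modules are linearly compact with respect to the product topology (Warner), and combines this with the observation that compact modules are linearly compact; the continuity of the $G$-action and the universal property are then deferred to the argument of Lemma~\ref{prod}, exactly as you do. By contrast, you bypass the linearly-compact machinery entirely and verify the finite-colength condition directly: a basic open submodule $U_{\sJ}=\prod_{i\in\sJ}U_i\times\prod_{i\notin\sJ}V_i$ has $V/U_{\sJ}\cong\prod_{i\in\sJ}V_i/U_i$ of finite length, and any open submodule contains such a $U_{\sJ}$. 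This is more elementary and self-contained, and in fact makes explicit the one point the paper's short proof leaves tacit --- namely that the product is not merely linearly compact but genuinely compact in the paper's sense (open submodules have finite-length quotients). The cost is only a couple of lines; what you lose is the connection to the classical linear-compactness literature, which the paper leverages again later (e.g.\ in Lemma~\ref{coprod-c}) to conclude closedness of $V$ in the ambient product.
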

\begin{proof}
A product of linearly compact $R$-modules is linearly compact with respect to the product topology \cite{War}. Since every compact module is linearly compact, then the category of compact modules is closed under products.

By exactly the same argument as in Lemma \ref{prod} for an arbitrary collection $\{V_i \}_{i \in \I}$ of elements of $\M_c(G)$ the product $V \coloneq \prod_{i \in \I} V_i$ in $R$-\textbf{mod}, endowed with the product topology, is a continuous $G$-module with respect to the componentwise action of $G$. 

\end{proof}

We now wish to form coproducts in $\M_{c}(G)$. 
For an arbitrary collection $\{V_i \}_{i \in \I} \in \M_{c}(G)$, we form the coproduct $V\coloneq \bigoplus_{i \in \I} V_i$ in $R$-\textbf{mod}. To define a topology $\sT_V$ on $V$ we mimic the procedure from Section \ref{two}:
Let $W \in \M_c(G)$. Suppose there exists a surjective $R$-linear map $q_W: V \to W$ which commutes with the $G$-action, such that the maps $q^i_W : V_i \to W$, factoring through $q_W$, are continuous. The topology $\sT_V$ on $V$ is induced by the embedding

\begin{equation} \label{emb-c}
q: V \to \prod_{(W, \t_W)}W, \quad v \mapsto (q_W(v))_{(W, \t_W)}.
\end{equation}

\begin{lemma} \label{coprod-c}
Let $\{V_i \}_{i \in \I}$ be an arbitrary collection of elements of $\M_c(G)$. Their coproduct is the module $(V, \sT_V)$ described above. 
\end{lemma}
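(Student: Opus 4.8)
The plan is to follow the template set by Lemma \ref{coprod}, adapting each step to the extra finiteness constraint that defines $\M_c(G)$. First I would observe that the indexing set of pairs $(W, \tau_W)$ — compact $G$-modules $W$ admitting a surjective continuous $G$-linear map $q_W: V \to W$ with the factor maps $q^i_W: V_i \to W$ continuous, taken up to isomorphism — forms a genuine set: each such $W$ is a quotient of the fixed $R$-module $V = \bigoplus_{i \in \I} V_i$, so there is only a set of isomorphism classes. Hence $\prod_{(W,\tau_W)} W$ makes sense, and by Lemma \ref{prod-c} it lies in $\M_c(G)$. The map $q$ of \eqref{emb-c} is injective because the identity quotient $V \twoheadrightarrow V$ need not be compact, but one can always find enough compact quotients to separate points: for any $0 \neq v \in V$, $v$ lies in finitely many of the $V_i$, and since each $V_i$ is Hausdorff there is an open submodule missing some component of $v$, giving a compact quotient $V_i / U_i$ and hence a compact quotient of $V$ detecting $v$. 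So $q$ is an embedding and $V$ carries the induced subspace topology $\sT_V$.

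Next I would record the structural facts: $V$ with $\sT_V$ is linearly topologized (a subspace of a linearly topologized space is linearly topologized, cf. \cite{Lef}), the open submodules have finite colength (they are traces of open submodules of $\prod_{(W,\tau_W)} W$, each of which has finite colength since it is a finite-support product of open submodules in the compact factors), and the $G$-action on $V$ is continuous because it is the restriction of the continuous action on $\prod_{(W,\tau_W)} W \in \M_c(G)$. The one subtlety is completeness: $V$ itself need not be complete, so — exactly as in Section \ref{two} — I would pass to the closure $\widebar{V}$ of $q(V)$ inside $\prod_{(W,\tau_W)} W$. A closed submodule of a complete linearly topologized module is complete \cite{Bou}, and it inherits the finite-colength property for open submodules, so $\widebar{V} \in \M_c(G)$. (Here one should check that the canonical maps $\alpha_i: V_i \to V \hookrightarrow \widebar{V}$ are still continuous, which follows since $q^i = q \circ \alpha_i$ is continuous by construction and $q$ is an embedding.)

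Then I would verify the universal property. Given $A \in \M_c(G)$ and morphisms $\beta_i: V_i \to A$ in $\M_c(G)$, the $R$-module coproduct gives a unique $R$-linear $f: V \to A$ with $f \alpha_i = \beta_i$; it is $G$-linear by the same componentwise computation as in Lemma \ref{coprod}. For continuity: given an open submodule $U \leq A$, the image $f(V)$ has $A / f(V)\cdot$-type quotients that are compact, so $A / \overline{f(V)}$ — or more precisely the relevant quotient through which $f$ factors — appears among the $(W, \tau_W)$; concretely $f(V) \leq A$ equipped with the subspace topology is a compact $G$-module quotient of $V$ (after replacing $A$ by $f(V)$ we may assume $f$ surjective), so it is one of the $W$'s, which forces $f^{-1}(U)$ to be open in $\sT_V$, and then $f$ extends continuously to $\widebar{V} \to A$ by completeness of $A$. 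Finally uniqueness of the extension to $\widebar{V}$ follows from Hausdorffness of $A$ and density of $V$ in $\widebar{V}$.

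The main obstacle I anticipate is the continuity half of the universal property, specifically justifying that $f(V)$ (or the quotient of $V$ that $f$ induces) really does occur among the $(W, \tau_W)$ used to define $\sT_V$: one must confirm that the topology $f$ induces on the quotient $V / \ker f$ makes it an object of $\M_c(G)$ — linearly topologized, open submodules of finite colength, and complete — which requires knowing that the subspace topology from $A$ on $f(V)$ has open submodules of finite colength and that $f(V)$ is closed (hence complete) in $A$, the latter being why the closure $\widebar{V}$ is needed on the source side but why we may still need care on the target side. A secondary technical point is checking that "up to isomorphism the pairs $(W,\tau_W)$ form a set" is robust enough that the product is legitimately small — this is routine but should be stated. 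Everything else is a direct transcription of the arguments in Lemma \ref{prod} and Lemma \ref{coprod} with "linearly topologized complete" replaced by "compact" and invoking Lemma \ref{prod-c} in place of Lemma \ref{prod}.
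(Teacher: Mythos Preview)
Your plan is broadly correct and parallels the paper's argument, but you miss the one genuinely new ingredient in the compact setting, and this omission is exactly what generates the ``main obstacle'' you flag.

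The paper does \emph{not} pass to the closure $\widebar{V}$. After showing, as you do, that every open submodule of $V$ has finite-length quotient, the paper invokes the fact that such a module is linearly compact, and linearly compact submodules of a Hausdorff linearly topologized module are closed \cite{War}. Hence $V$ itself is already closed in $\prod_{(W,\tau_W)} W$, and therefore complete. No closure is needed, and the coproduct is literally $(V,\sT_V)$, not $\widebar{V}$. This is the point where the compact case is \emph{easier} than the $\M_{ltc}$ case, not harder.

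The same observation dissolves your anticipated obstacle about the universal property. You worry whether $f(V)\subseteq A$ is closed (hence complete) so that it qualifies as one of the $(W,\tau_W)$. But once you know $V\in\M_c(G)$ is linearly compact, its continuous image $f(V)$ is linearly compact too, hence closed in $A$; alternatively, and this is closer to what the paper actually does, you can simply appeal to the universal-property argument of Lemma~\ref{coprod} verbatim, since $\M_c(G)\subset\M_{ltc}(G)$ and the construction of the coproduct topology is formally identical. Either way, the difficulty you isolate is an artefact of having overlooked linear compactness.

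So: your outline would work after taking closures, but it is more laborious than necessary and obscures the reason the compact case behaves well. The missing idea is ``finite-colength opens $\Rightarrow$ linearly compact $\Rightarrow$ closed''.
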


\begin{proof}
By Lemma \ref{prod-c} $\prod_{(W, \t_w)}W \in \M_{ltc}(G)$. Since $V \sset \prod_{(W, \t_w)}W$, the topology on $V$ is linear \cite{Lef}. Let $U \in \prod_{(W, \t_w)}W$ be a basic open. Then $U=U_i \x \prod_{(W, \t_w)}W$, where $U_i \sset W$ for some $W$, is an open submodule. By definition $q^{-1}(U)$ is open in $V$. But $q^{-1}(U)=\bigoplus_{i \in \I} q^{-1}(U) \cap V_i$. Since each $q^i_W: V_i \to W$ is continuous, it follows that $q^{-1}(U) \cap V_i$ is an open submodule of $V_i$. Hence, the quotient is of finite length. This implies that $V /q^{-1}(U)$ is also of finite length, showing that the topology $\sT_V$ is compact.
Every compact space is linearly compact. Thus, $V$ is closed in $\prod_{(W, \t_W)}W$ \cite{War}. In particular, $V$ is complete \cite{Bou}.
The map $G \x \prod_{(W, \t_W)} W \to \prod_{(W, \t_W)} W$ is continuous, and thus so is its restriction to a subspace, i.e., $V \in \M_c(G)$. As $\M_c(G) \sset \M_{ltc}(G)$ and the coproducts in the two categories are constructed in the same way, Lemma \ref{coprod} implies that $V$ satisfies the universal property of the coproduct in $\M_c(G)$, finishing the proof.

\end{proof}
Having constructed products and coproducts in $\M_c(G)$, in order to establish existence of left and right adjoint to the restriction functor $\res_H^G$ in $\M_c(G)$, we need to check whether it is continuous and cocontinuous.

\begin{thm} \label{adj-c}
Let $G$ be a topological group and $H \leq G$ a closed subgroup. The restriction functor
$$\res_H^G: \M_c(G) \to \M_c(H)$$
has a left adjoint. Hence, we have a well-defined $\coind_H^G: \M_c(H) \to \M_c(G)$. It has a right adjoint if $H$ is open.
\end{thm}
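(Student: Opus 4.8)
The plan is to mimic exactly the proof of Theorem \ref{adjoints-ltc}, using Freyd's Adjoint Functor Theorem together with the product and coproduct constructions of Lemma \ref{prod-c} and Lemma \ref{coprod-c}. Since $\M_c(G)$ is abelian, equalizers and coequalizers exist, and the restriction functor $\res_H^G$ does not change morphisms, so it automatically commutes with (co)equalizers. By the product-equalizer and coproduct-coequalizer constructions of (co)limits, it therefore suffices to verify that $\res_H^G$ commutes with arbitrary products (for the left adjoint $\coind_H^G$) and with arbitrary coproducts (for the right adjoint $\ind_H^G$, under the additional hypothesis that $H$ is open). I will also note that the solution set condition (SSC) is satisfied: as in $\M_d(G)$, every morphism factors through a quotient, and quotients of a fixed object form a set.

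First I would treat the left adjoint. Here one wants $\res_H^G(\prod_i V_i) \cong \prod_i \res_H^G(V_i)$. By Lemma \ref{prod-c} the product in $\M_c(G)$ is the $R$-module product with the product topology and componentwise $G$-action, and the same description holds in $\M_c(H)$. Thus both sides are the same $R$-module with the same (product) topology, and the $H$-actions agree because the $G$-action on $\prod_i V_i$ restricts to the componentwise $H$-action, which is already continuous. Hence $\res_H^G$ is continuous and $\coind_H^G$ exists. This step is essentially immediate and parallels part (1) of Theorem \ref{adjoints-ltc}.

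Next, for the right adjoint, assume $H$ is open. I would show $\res_H^G$ is cocontinuous, i.e. $\bigoplus_i \res_H^G(V_i) \cong \res_H^G(\bigoplus_i V_i)$, where $\bigoplus$ denotes the coproduct in $\M_c(G)$ and $\M_c(H)$ respectively. Since both sides have the same underlying $R$-module $\bigoplus_i {}_HV_i$ with a continuous $H$-action, it reduces to checking the two coproduct topologies coincide. Following the strategy of Theorem \ref{adjoints-ltc}, for each compact $H$-module $(W,\tau_W)$ receiving a surjective $G$-equivariant (here $H$-equivariant) $q_W$ with the $q^i_W$ continuous, I would produce a compact $G$-module playing the role of $W$ on the $G$-side, namely $\widetilde W := \fun_H(G,W)$, the space of functions $f\colon G\to W$ with $f(\vg\vh)=\vh\cdot f(\vg)$, with $G$ acting by $(\vg\cdot f)(\vx)=f(\vx\vg)$. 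Choosing a set $X$ of right coset representatives of $H$ in $G$ gives an $H$-module isomorphism $\fun_H(G,W)\cong\prod_{a\in X} a\otimes_H W$, which by Lemma \ref{prod-c} lies in $\M_c(H)$; one transports the product topology back to $\fun_H(G,W)$, and the openness of $H$ ensures that a neighbourhood base at $1_H$ in $W$ gives one at $1_G$, so $\widetilde W\in\M_c(G)$. Then extend $\varphi$ to $\widetilde\varphi\colon {}_H\bigoplus_iV_i\to\widetilde W$ by $v\mapsto f_v$ with $f_v(\vg)=\varphi(\vg\cdot v)$, and check that for every open $U\sset W$ the set $\widetilde U=\{f : f(1)\in U\}$ is open in $\widetilde W$ with $\widetilde\varphi^{-1}(\widetilde U)\sset\varphi^{-1}(U)$; by the universal property of the coproduct this forces the two topologies to agree.

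The one genuinely new point compared to Section \ref{two} is the \emph{compactness} of $\widetilde W=\fun_H(G,W)$: one must confirm that $\prod_{a\in X} a\otimes_H W$ really is an object of $\M_c$, i.e. that open submodules have quotients of finite length. This is where the hypothesis $H$ open (equivalently, $X$ need not be finite but the product of compact modules is again compact by Lemma \ref{prod-c}) and the finite-length condition on $W$ get used; I expect this verification — that the transported product topology on $\fun_H(G,W)$ has the finite-colength property and is complete — to be the main obstacle, though it follows from Lemma \ref{prod-c} once the identification with a product of copies of $W$ is set up correctly. Everything else is a routine transcription of the $\M_{ltc}(G)$ argument, using that $\M_c(G)\sset\M_{ltc}(G)$ and that the coproducts in the two categories are built the same way (Lemma \ref{coprod-c}).
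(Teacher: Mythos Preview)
Your proposal is correct and follows essentially the same route as the paper, only much more explicitly: the paper's proof consists of a single sentence observing that $\M_c(G)$ is an abelian subcategory of $\M_{ltc}(G)$ in which products and coproducts agree with those of $\M_{ltc}(G)$ (Lemmas \ref{prod-c} and \ref{coprod-c}), so the statement is an immediate corollary of Theorem \ref{adjoints-ltc}. Your detailed verification—including the compactness of $\fun_H(G,W)$ via the identification with $\prod_{a\in X} a\otimes_H W$ and Lemma \ref{prod-c}—is exactly what that one-line reduction is implicitly invoking, and your closing remark about $\M_c(G)\sset\M_{ltc}(G)$ with matching coproducts is precisely the paper's argument.
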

\begin{proof}
Since $\M_c(G)$ is an abelian subcategory of $\M_{ltc}(G)$ and the products and coproducts in $\M_c(G)$ are the same as in $\M_{ltc}(G)$, then the statement is just a corollary to Theorem \ref{adjoints-ltc}. 
\end{proof}


Having already studied the categories $\M_c(G)$ and $\M_{ltc}(G)$, there is another category of topological vector spaces which is interesting to consider. We call a vector space \emph{Tate}, if it splits as a direct sum of a discrete and a compact space. Thus, we can form a category $\M_T(G)$ of Tate spaces on which $G$ acts continuously. 
However, if one would like to investigate Frobenius reciprocity in $\M_T(G)$, one encounters a difficulty straight away - the category is not even closed under products. Therefore, we propose to look at the free product completion $\widebar{\M_T(G)}$ of $\M_T(G)$. Since products and coproducts in $\M_{ltc}(G)$ exist and $\M_T(G) \sset \M_{ltc}(G)$, then $\widebar{\M_T(G)}$ would also be a subcategory of $\M_{ltc}(G)$. If the products in the two categories coincide, then by Theorem \ref{adjoints-ltc} the restriction functor in $\widebar{\M_T(G)}$ would always have a left adjoint. An interesting question in this case would be whether for $(\s, W) \in \widebar{\M_T(H)}$, there exists a canonical $G$-submodule of $\coind_H^G(\s)$ which is Tate.
Coproducts and induction can be approached similarly. Firstly, we construct a free coproduct completion $\widehat{\M_T(G)}$ of $\M_T(G)$. If coproducts there coincide with coproducts in $\M_{ltc}(G)$, then we expect to have existence of an induction functor in $\widehat{\M_T(G)}$ in the case when $H \leq G$ is also open.

\end{document}